\newtheorem*{maintheorem}{Main Theorem}
\newtheorem{theorem}{Theorem}[section]
\newtheorem*{theorem*}{Theorem}
\newtheorem{proposition}[theorem]{Proposition}
\newtheorem*{proposition*}{Proposition}
\newtheorem{lemma}[theorem]{Lemma}
\newtheorem*{lemma*}{Lemma}
\newtheorem{corollary}[theorem]{Corollary}
\newtheorem*{corollary*}{Corollar}
\newtheorem{fact}[theorem]{Fact}
\newtheorem*{fact*}{Fact}
\theoremstyle{definition}
\newtheorem{definition}[theorem]{Definition}
\newtheorem*{definition*}{Definition}
\newtheorem{claim}[theorem]{Claim}
\newtheorem*{claim*}{Claim}
\newtheorem*{conjecture*}{Conjecture}
\theoremstyle{remark}
\newtheorem*{example*}{Example}
\newtheorem{remark}[theorem]{Remark}
\newtheorem*{remark*}{Remark}
\newtheorem*{note*}{Note}
\newtheorem*{question*}{Question}
\newtheorem*{acknowledgements*}{Acknowledgements}
\newcommand{\Z}{\ensuremath{\mathbb{Z}}}
\newcommand{\A}{\ensuremath{\mathbf{A}}}
\newcommand{\B}{\ensuremath{\mathbf{B}}}
\newcommand{\G}{\ensuremath{\mathbf{G}}}
\newcommand{\Q}{\ensuremath{\mathbf{Q}}}
\newcommand{\Zig}{\ensuremath{\mathbf{Z}}}
\newcommand{\bfk}{\ensuremath{\mathfrak{b}}}
\newcommand{\dfk}{\ensuremath{\mathfrak{d}}}
\newcommand{\cfk}{\ensuremath{\mathfrak{c}}}
\newcommand{\F}{\ensuremath{\mathcal{F}}}
\newcommand{\J}{\ensuremath{\mathcal{J}}}
\newcommand{\cA}{\ensuremath{\mathcal{A}}}
\newcommand{\cB}{\ensuremath{\mathcal{B}}}
\newcommand{\lang} {\ensuremath{\langle} }
\newcommand{\rang}{\ensuremath{ \rangle} }
\newcommand{\lB}{\ensuremath{\lbrace} }
\newcommand{\rB}{\ensuremath{ \rbrace} }
\newcommand{\et}{ \ensuremath{\wedge} }
\newcommand{\sbeq}{\ensuremath{ \subseteq }}
\newcommand{\defeq}{\vcentcolon=}
\newcommand{ \xx }{ \Bar{x}   }
\newcommand{ \yy }{ \Bar{y}   }
\newcommand{ \zz }{ \Bar{z}   }
\newcommand{ \uu }{ \Bar{u}   }
\newcommand{ \vv }{ \Bar{v}   }
\newcommand{ \ww }{ \Bar{w}   }
\newcommand{ \lam }{ \ensuremath{\lambda}   }
\newcommand{ \Lam }{ \ensuremath{\Lambda}   }
\newcommand{ \cof }{ \ensuremath{\textrm{cof}  }   }
\renewcommand{\Diamond}{\lozenge}
\renewcommand{\overline}[1]{\Bar{#1}}
\begin{document}

\title{Nonvanishing derived limits without scales}

\author{Matteo Casarosa}
\address{
Institut de Math\'ematiques de Jussieu - Paris Rive Gauche (IMJ-PRG)\\
Universit\'e Paris Cit\'e\\
B\^atiment Sophie Germain\\
8 Place Aur\'elie Nemours \\ 75013 Paris, France, and Dipartimento di Matematica, Universit\`{a} di Bologna, Piazza di
Porta S. Donato, 5, 40126 Bologna,\ Italy}
\email{matteo.casarosa@imj-prg.fr}
\urladdr{https://webusers.imj-prg.fr/~matteo.casarosa/}

\email{matteo.casarosa@unibo.it}
\urladdr{https://www.unibo.it/sitoweb/matteo.casarosa/en}

\keywords{derived limits, strong homology, cardinal characteristics, weak diamond}

\subjclass{03E35, 03E17, 03E75, 18E10, 55Nxx}

\begin{abstract}

The derived functors $\lim^n$ of the inverse limit are widely studied for their topological applications, among which are some repercussions on the additivity of strong homology. Set theory has proven useful in dealing with these functors, for instance in the case of the inverse system $\mathbf{A}$ of abelian groups indexed by ${}^\omega \omega$. So far, consistency results for nonvanishing derived limits of $\A$ have always assumed the existence of a scale (i.e. a linear cofinal subset of $({}^\omega \omega, \leq^\ast )$, or equivalently that $\mathfrak{b} = \mathfrak{d} $). Here we do away with that assumption and prove that nonvanishing derived limits, and hence the non-additivity of strong homology, are consistent with any value of $\aleph_1 \leq \mathfrak{b} \leq \mathfrak{d} < \aleph_\omega$, thus giving a partial answer to a question of Bannister.

\end{abstract}

\maketitle

\section{Introduction}

This is an article on applications of set theory to homological algebra, and more precisely to the derived functors of the inverse limit. These are usually denoted by $\lim^n$ for $n > 0 $ and they give an estimate of how far the inverse limit is from being an exact functor.  

In recent years, set theory has proven useful to deal with some problems related to derived limits. These problems have repercussions in algebraic topology. For instance, the non-additivity of $\textit{strong homology}$ is implied by the nonvanishing of any of the derived limits $\lim^n$ of a certain pro-group $\A$ indexed on ${}^\omega \omega$ (\cite{mardevsic1988strong}), while the vanishing of $\lim^n$ for all $n$ for a certain class of pro-groups that generalizes $\A$ guarantees the additivity of strong homology on the class of locally compact separable metric spaces (\cite{bannister2020additivity}, \cite{bannister2023additivity}). As it happens oftentimes with set theory, many questions on this subject turn out to be independent of the $\mathrm{ZFC}$ axioms, and many theorems take the form of consistency results. On the nonvanishing side, the historical progression is as follows:

\begin{enumerate}

    \item $\mathfrak{d} = \aleph_1$ implies that $\lim^1 \A \neq 0$ (\cite{dow1989strong}).

    \item $\lim^1 \A^\mathcal{F} \neq 0$ is a $\mathrm{ZFC}$ fact, whenever $\mathcal{F}$ is an $\omega_1$-chain in $({}^\omega \omega , \leq^\ast)$, and $\A^\mathcal{F}$ is the restriction of $\A$ to the indexing subset $\mathcal{F}$ (\cite{bekkali2006topics}). 

    \item  It is consistent that $\lim^1 \A \neq 0$, $\mathfrak{c} = \aleph_2$ and $\mathrm{MA}_{\aleph_1} $ holds (and hence $\mathfrak{b} = \mathfrak{d} = \aleph_2$    (\cite{todorcevic1998first})).

\end{enumerate}

Then there is a small series of nonvanishing results for higher derived limits, namely, the functors $\lim^n$ for $n > 1$.

\begin{enumerate}

\setcounter{enumi}{3}

    \item It is consistent that $\lim^n \Zig_n \neq 0$ for all $n$ were $\Zig_n$ is a certain pro-group indexed on the ordinal $\omega_n$ (\cite{ziegler1999higher}). In particular, the case $n =1$ is a $\mathrm{ZFC}$ theorem that can be proved for instance using the $\rho_1$ function (\cite{todorcevic2007walks}) and one can then use the guessing principles $\Diamond (S^i_{i+1})$ for $i<n$ to run an induction through higher $n$.   

    \item It is consistent that $\lim^2 \A \neq 0$. In particular, it is implied by $\mathfrak{b} = \mathfrak{d} = \aleph_2 + \Diamond(S^1_2) $(\cite{berg}). Here once again one starts with a $\mathrm{ZFC}$ base case and then moves to the higher case using the guessing principle. Note that the condition $\mathfrak{b} = \mathfrak{d} = \kappa$ is equivalent to the existence of a cofinal $\kappa$-chain in $({}^\omega \omega , \leq^\ast)$, that is, a $\kappa$-scale. This allows us to work in a linear situation pretty much like in the case of $\Zig_n$. Note also that $ \Diamond(S^2_1) $ implies $\mathfrak{c} \leq \aleph_2$, and this is the obstruction to using the classical diamonds to get beyond $n = 2$.

    \item For all $n >0$ it is consistent that $\lim^n \A \neq 0$. In particular, this is implied by $\mathfrak{b} = \mathfrak{d} = \aleph_n + \bigwedge_{i < n} w\Diamond (S^i_{i+1}) $ (\cite{velickovic2021non}). The weak diamonds $w \Diamond(S)$ are some guessing principles that were originally introduced by Devlin and Shelah in \cite{devlin1978weak}. These are sufficient to run an argument similar to that for $\Zig_n$ and are compatible with a longer scale. 
    
\end{enumerate}

Since all nonvanishing results for this kind of systems so far assume or entail a linear indexing set (or cofinal subset thereof) it is natural to ask what the behavior of the derived limits of $\A$ is in case $\mathfrak{b} < \mathfrak{d}$. Indeed, this problem is mentioned among the open questions discussed in \cite{bannister2023additivity} and \cite{bergfalk2023simultaneously}. In this paper, we are going to discuss precisely that question, and more precisely we are going to prove the following.

\begin{maintheorem}

\label{maintheorem}

For all $ 1 \leq k \leq n < \omega$, it is consistent that $\bfk= \aleph_k$, $\dfk= \aleph_n$ and $\lim^n \A \neq 0$.
\end{maintheorem}

This in turn implies (see \cite{mardevsic1988strong}) the following.

\begin{corollary}

For all $1 \leq k \leq n < \omega$, it is consistent that $\bfk= \aleph_k$, $\dfk= \aleph_n$, and strong homology is not additive on separable locally compact metric spaces.
    
\end{corollary}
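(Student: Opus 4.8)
The Corollary is immediate once the Main Theorem is available. By the result of Mardešić and Prasolov cited above (\cite{mardevsic1988strong}), the nonvanishing of $\lim^n \A$ for any single $n$ entails the failure of additivity of strong homology on the class of separable locally compact metric spaces. Since the Main Theorem produces, for each fixed pair $1 \leq k \leq n < \omega$, a model in which $\bfk = \aleph_k$, $\dfk = \aleph_n$, and $\lim^n \A \neq 0$, the very same model witnesses the Corollary. The substantive content thus lies in the Main Theorem, and the plan is to prove that; the Corollary is then a one-line deduction.

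First I would recall the standard reduction of $\lim^n \A \neq 0$ to a purely combinatorial statement. The derived limits of $\A$ are computed by a cochain complex whose $n$-cochains are families $\Phi = \langle \Phi_\sigma \rangle$ indexed by the increasing $n$-tuples $\sigma$ drawn from a cofinal subset of $({}^\omega\omega, \leq^\ast)$ and valued in the relevant groups, subject to an alternating-sum (cocycle) coherence condition; one has $\lim^n \A \neq 0$ exactly when some coherent such family fails to be a coboundary, i.e.\ is \emph{nontrivial}. Because derived limits of $\A$ depend only on the cofinal structure of the indexing order, I may compute them over any convenient cofinal $D \subseteq {}^\omega\omega$. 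In the earlier scale-based arguments (\cite{velickovic2021non}) the set $D$ is a linearly ordered $\aleph_n$-scale, and one builds a nontrivial coherent family by a single transfinite recursion along the scale, invoking $w\Diamond(S^i_{i+1})$ at stages of cofinality $\omega_i$ to defeat every candidate trivialization. The difficulty in the present setting is precisely the absence of such a linear spine once $\bfk < \dfk$.

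Accordingly, the heart of the argument will be to replace the $\aleph_n$-scale by a non-linear directed cofinal structure $D$ realizing $\bfk = \aleph_k$ and $\dfk = \aleph_n$ simultaneously, equipped with a stratification along which the coherent family can still be constructed. I would first secure the cardinal characteristics and the guessing principles together: in a suitable model — obtained, say, by a finite-support matrix iteration of Hechler-type forcings over a ground model of GCH, separating $\bfk$ and $\dfk$ while leaving the cardinal arithmetic needed for the relevant $w\Diamond(S^i_{i+1})$ intact — both $\bfk = \aleph_k$, $\dfk = \aleph_n$ and the requisite weak diamonds hold. The delicate step is then the recursive construction of $\Phi$ over $D$: at each stage one extends the family to newly enumerated tuples while (a) preserving coherence across \emph{incomparable} members of $D$, a situation the linear case never confronts, and (b) using the weak-diamond guesses to anticipate and defeat any $(n-1)$-cochain $\Psi$ that would trivialize $\Phi$. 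I expect the main obstacle to be exactly this: finding the correct branching stratification of $D$, matching its levels to the appropriate stationary sets $S^i_{i+1}$, and proving that the guessing still suffices when the recursion is no longer indexed by a well-order.

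Finally, I would verify that the family produced is genuinely nontrivial. Supposing a global trivialization $\Psi$ existed, the weak-diamond guessing should have captured a restriction of $\Psi$ on a stationary set of stages and engineered a disagreement with $\Phi$ there, a contradiction. This establishes $\lim^n \A \neq 0$ in the model, and since the cardinal characteristics are pinned at $\bfk = \aleph_k$ and $\dfk = \aleph_n$, the Main Theorem follows; the Corollary then follows at once from \cite{mardevsic1988strong}.
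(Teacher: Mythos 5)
Your deduction of the Corollary is exactly the paper's: it is immediate from the Main Theorem together with the Marde\v{s}i\'c--Prasolov result of \cite{mardevsic1988strong} that $\lim^n \A \neq 0$ for some $n$ entails the non-additivity of strong homology on separable locally compact metric spaces. Your accompanying sketch of the Main Theorem is also broadly in line with the paper (a nonlinear Hechler-type iteration $\mathbb{H}_{\omega_k \times \omega_n}$ giving a cofinal rectangle, a product with $\sigma$-closed Cohen-type forcings for the weak diamonds, and a recursive construction of a nontrivial coherent family); the one point where the paper's mechanism differs from your guess is that the recursion \emph{is} still indexed by a well-order, via a continuous filtration of the index poset into $d$-closed pieces of smaller cofinality, with the nontriviality localized on the restriction to an unbounded chain rather than obtained from a ``branching stratification'' of a non-well-ordered spine.
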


This is in some way complementary to the work of Bannister in \cite{bannister2023additivity} where, by adding $\aleph_\omega $-many Cohen reals over a model of $\mathrm{GCH}$, additivity is forced for that same class of spaces.

The structure of the present paper is the following. First, we are going to prove by induction that the existence of an unbounded chain of a certain length in $({}^\omega \omega, \leq^\ast)$, together with some guessing principles, implies $\lim^n \A \neq 0$. This we prove by induction starting from a base case where we show the presence of coherent families that have some nontriviality concentrated in a particular part of them. This stronger nontriviality condition is then shown to be preserved in the induction. Then we present a forcing that shows the consistency of the assumptions used in the inductive argument with all the relevant values of $\mathfrak{b}$ and $\mathfrak{d}$. Finally, in the appendix we present a proof of Roos' characterization of derived limits, which is used for this kind of set-theoretic arguments.

\subsection*{Acknowledgements}

I thank Jeffrey Bergfalk, Chris Lambie-Hanson and Alessandro Vignati for helpful conversations. Part of these happened during a visit to the University of Barcelona funded by the FSMP, and a visit to the Institute of Mathematics of the Czech Academy of Sciences, partly funded by the Starting Grant 101077154 ``Definable Algebraic Topology" from the European Research Council, and partly by the GAČR project 23-04683S. Finally, I thank Filippo Callegaro for supervising my master's thesis from which the appendix is taken.

\section{Preliminaries and notation}

Let $\mathbf{G} = ( G_{\lambda}, p_{\lambda}^{\mu} , \Lambda )  $ be an inverse system of abelian groups, with groups $G_{\lambda}$  indexed over the directed quasi-order   $(\Lambda, \leq) $ and bonding morphisms $p^{\mu}_{\lambda}: G_{\mu} \to G_{\lambda} $, with $\lambda \leq \mu \in \Lambda$, between them. Let moreover $\Lambda^{(n)}$ be the set of $\leq$-ordered $n+1$-tuples in $\Lambda$. If $\overline{\lambda} = (\lambda_0, ... , \lambda_n ) \in \Lambda^{(n)} \, $we write $\overline{\lambda^i} = (\lambda_0, ..., \lambda_{i-1}, \widehat{\lambda_i}, \lambda_{i+1}, ..., \lambda_n) $, meaning that we have removed the element $\lambda_i$.

The derived limit $\lim^n$, for $n > 0$, is the $n$-th derived functor of the inverse limit $\lim$. We can also write $\lim^0$ for $\lim$. We refer the reader interested in the definition of derived functors through resolutions or the total derived functor to \cite{weibel1995introduction} and \cite{schneiders1999quasi} respectively. Throughout the proof of the main theorem, we will use the following characterization due to Roos (\cite{roos1961foncteurs}).

\begin{definition}[Roos complex]
    
\label{defrooscomplex}

We define the cohomological Roos complex as 

\[ K^{\bullet} (\mathbf{G}) \, = \, ( \, 0 \to    K^0 (\mathbf{G}) \xrightarrow[]{\delta^1}    K^1 (\mathbf{G})  \xrightarrow[]{\delta^2}    K^2 (\mathbf{G}) \xrightarrow[]{\delta^3} ... \, ),  \]
where

\[  K^n (\G) = \prod_{\overline{\lambda} \in \Lambda^{(n)} } X_{\lambda_0 } = \prod_{\lambda_0 \leq ... \leq \lambda_n } G_{\lambda_0}. \]
In other words, $K^n $ consists of all functions $\overline{x} : \Lambda^{(n)} \to \bigcup_{\lambda} G_{\lambda}  $ (we think of the $G_{\lambda}$ as disjoint) such that $\overline{x} (\overline{\lambda})  \in  G_{\lambda_0} $. We will also write $x_{\overline{\lambda}}$ for $\overline{x} (\overline{\lambda}) $ and $\G^{(n)}$ for $K^n(\G)$ and $\G^{(n)} \restriction C$ for $K^n(\G \restriction C)$.  The coboundary maps are 

\[   \delta^n (\overline{x})_{\overline{\lambda}} = p_{\lambda_0}^{\lambda_1} (x_{\overline{\lambda^0}}  )  + \sum_{1 \leq i \leq n} (-1)^i x_{\overline{\lambda^i}}.                                   \]
Note that we stipulate that $K^{-1} (\G) = 0$ and $\delta^0 = 0$.

\end{definition}

We also define a couple of terms related to this construction.

\begin{definition}[Coherent and trivial elements]

We call an element  $n$-\textit{coherent}  if it is in the kernel of $\delta^{n+1} $, and we call it $n$-\textit{trivial} if it is in the image of $\delta^n$. If $\delta^n (\overline{y}) = \xx$ we say that $\overline{y}$ \textit{trivializes} $\xx$. 

\end{definition}

The derived limits can be computed as the cohomology of the Roos complex.

\begin{theorem}

Let $\G$ be an inverse system of groups, $K^\bullet $ be the associated Roos complex, and $n \geq 0$. Then

\begin{center}

$ \lim^n \G \cong H^n (K^\bullet)   $.
\end{center}
    
\end{theorem}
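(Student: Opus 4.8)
The plan is to identify $\lim^n$ with the right derived functors of $\operatorname{Hom}$ out of the constant system, and then to exhibit $K^\bullet(\G)$ as the complex one obtains by applying $\operatorname{Hom}(-,\G)$ to an explicit projective resolution of that constant system. An inverse system over $(\Lambda,\leq)$ is the same thing as a functor $\Lambda^{\mathrm{op}}\to\mathbf{Ab}$, and this functor category is abelian with enough projectives and injectives. Writing $\underline{\Z}$ for the constant system with value $\Z$ and identity bonding maps, one has a natural isomorphism $\lim\G\cong\operatorname{Hom}(\underline{\Z},\G)$, so that $\lim^n\G\cong\operatorname{Ext}^n(\underline{\Z},\G)$, computable by resolving $\underline{\Z}$ projectively and applying $\operatorname{Hom}(-,\G)$.

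First I would introduce the free (representable) systems $P_\lambda$, defined by $P_\lambda(\mu)=\Z$ when $\mu\leq\lambda$ and $P_\lambda(\mu)=0$ otherwise, with identity bonds among the nonzero terms. By the Yoneda lemma together with the free--forgetful adjunction one gets a natural isomorphism $\operatorname{Hom}(P_\lambda,\G)\cong G_\lambda$, and each $P_\lambda$ is projective because evaluation at a point is exact. A relation $\lambda\leq\lambda'$ in $\Lambda$ induces a morphism $P_\lambda\to P_{\lambda'}$, and under $\operatorname{Hom}(-,\G)$ this recovers exactly the bonding map $p^{\lambda'}_\lambda\colon G_{\lambda'}\to G_\lambda$.

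Next I would assemble the resolution by setting $P_m=\bigoplus_{\lambda_0\leq\cdots\leq\lambda_m}P_{\lambda_0}$, with differential the alternating sum $\sum_i(-1)^i d_i$ of the face maps $d_i$ that delete $\lambda_i$; here $d_i$ for $i\geq 1$ is the identity on the summand (since $\lambda_0$ is retained), while $d_0$ is the structure map $P_{\lambda_0}\to P_{\lambda_1}$ attached to $\lambda_0\leq\lambda_1$. Applying $\operatorname{Hom}(-,\G)$ turns $\bigoplus$ into $\prod$ and, by the previous paragraph, turns $P_m$ into $\prod_{\lambda_0\leq\cdots\leq\lambda_m}G_{\lambda_0}=K^m(\G)$. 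A direct check of the induced cochain maps then matches them with the $\delta^m$: the dual of $d_0$ produces the summand $p^{\lambda_1}_{\lambda_0}(x_{\overline{\lambda^0}})$, and the duals of the $d_i$ for $i\geq 1$ produce the summands $(-1)^i x_{\overline{\lambda^i}}$.

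It remains to verify that $\cdots\to P_1\to P_0\to\underline{\Z}\to 0$ is exact, and this is the substantive step. Exactness of a sequence of functors can be tested after evaluation at each $\mu\in\Lambda$, where $P_\bullet(\mu)$ becomes the (unnormalised) simplicial chain complex on the ordered tuples $\mu\leq\lambda_0\leq\cdots\leq\lambda_m$, augmented toward $\Z$; equivalently, the chain complex of the nerve of the up-set $\{\nu\in\Lambda:\mu\leq\nu\}$. Since $\mu$ is a least element of that up-set, the extra degeneracy $s(\lambda_0,\ldots,\lambda_m)=(\mu,\lambda_0,\ldots,\lambda_m)$ is a contracting homotopy, and the standard computation $ds+sd=\mathrm{id}$ in positive degrees, together with the augmentation identity, shows the augmented complex is acyclic. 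Hence $P_\bullet$ is a projective resolution of $\underline{\Z}$, and $H^n(K^\bullet(\G))\cong\operatorname{Ext}^n(\underline{\Z},\G)\cong\lim^n\G$. I expect the only delicate bookkeeping to be the sign conventions and the identification of the $d_0$ face with the bonding map, while the genuinely nontrivial point is the contracting homotopy establishing exactness of the resolution.
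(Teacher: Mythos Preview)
Your argument is correct, but it follows a genuinely different route from the paper's. The paper (in its Appendix) never resolves $\underline{\Z}$ projectively; instead it shows directly that the cohomology functors $T^n(-)=H^n(K^\bullet(-))$ form a $\delta$-functor, checks $T^0\cong\lim$, and then proves effaceability: every $F$ embeds into a product of ``coinduced'' functors $F^i_a$ (defined by $F^i_a(j)=\prod_{\J(i,j)}a$) on which all higher $T^n$ vanish. Grothendieck's criterion then gives universality, and the comparison with $\lim^n$ follows. The technical core of the paper's proof---that the Roos complex of $F^i_a$ is homotopy equivalent to $a\to 0\to 0\to\cdots$---is established via the nerve of an auxiliary category with an initial object, which is exactly the same kind of ``extra degeneracy'' contraction you use to show your resolution is exact; so the two proofs share the same simplicial heart, just packaged dually.

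What each approach buys: your $\operatorname{Ext}^n(\underline{\Z},-)$ argument is shorter and more concrete, but it relies on the target category admitting the coproducts $\bigoplus P_{\lambda_0}$ and the representable projectives $P_\lambda$ (and implicitly on balancing $\operatorname{Ext}$); this is unproblematic for $\mathbf{Ab}$, which is all the stated theorem needs. The paper's effaceability argument is phrased so as to work in any abelian category satisfying $AB4^\ast(\lambda)$, where one has the required products but not necessarily projectives or the relevant coproducts; that generality is the reason the paper takes the universal-$\delta$-functor route rather than yours.
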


This is part of a theorem for which the literature on applications of set theory to this topic usually references the rather long proof of \cite{mardesic2000strong} in the case of modules. In the appendix, we offer a shorter categorical proof that may be more amenable to logicians. There we expand upon the argument of \cite{neeman2014triangulated} by clarifying some constructions and proving an intermediate claim that may be obscure to some readers.

We now record two basic results we will need in the next section.

\begin{definition}

Let 

\[   \textbf{G} = \lB G_{\lambda} ; p^{\mu}_{\lambda} : \lambda \leq \mu \in \Lambda    \rB  .       \]  
be an inverse system of abelian groups. We say that the system is surjective if each $p^{\mu}_{\lam} $ is surjective.

\end{definition}

\begin{theorem}[Surjective Goblot's Theorem \cite{velickovic2021non}]

\label{flasquegoblot}

Let $n \geq 0$. If $\G$ is a surjective inverse system of abelian groups with $\cof( \G) = \aleph_n $, then $\lim^k \G = 0 $, for all $k \geq n+1$.

\end{theorem}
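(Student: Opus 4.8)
The plan is to reduce to a cofinal subsystem and then argue by induction on $n$. Since derived limits are invariant under restriction to a cofinal subset, I may assume $\Lambda$ is directed with $|\Lambda| = \aleph_n$, and I must show $\lim^k \G = 0$ for all $k \geq n+1$. For the base case $n = 0$, the set $\Lambda$ has a cofinal $\omega$-chain, so $\G$ is (cofinally) a tower. Here surjectivity of the bonding maps is exactly the Mittag--Leffler condition, which gives $\lim^1 \G = 0$, while $\lim^k \G = 0$ for $k \geq 2$ holds automatically over $\omega$ because the Roos complex of an $\omega$-indexed system has vanishing cohomology in degrees $\geq 2$. This base case is the \emph{only} place where surjectivity enters: it is precisely what upgrades the classical Goblot bound $k \geq n+2$ to $k \geq n+1$.

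For the inductive step, fix $n \geq 1$ and write $\Lambda = \bigcup_{\alpha < \omega_n} \Lambda_\alpha$ as a continuous increasing union of directed subsets $\Lambda_\alpha$ with $|\Lambda_\alpha| < \aleph_n$, obtained by a standard closing-off construction (at successors add an upper bound for each pair, take unions at limits). Each $\Lambda_\alpha$ is then directed with $\cof(\Lambda_\alpha) \leq \aleph_{n-1}$, and $\G \restriction \Lambda_\alpha$ is still surjective, so by the induction hypothesis $\lim^j (\G \restriction \Lambda_\alpha) = 0$ for all $j \geq n$. Given a $k$-cocycle $x$ with $k \geq n+1$, I will build by transfinite recursion a coherent family of trivializations $y^\alpha \in K^{k-1}(\G \restriction \Lambda_\alpha)$ with $\delta^k y^\alpha = x \restriction \Lambda_\alpha$ and $y^\beta \restriction \Lambda_\alpha = y^\alpha$ for $\alpha < \beta$; the union $y = \bigcup_\alpha y^\alpha$ then trivializes $x$, since continuity of the filtration gives $\Lambda_\gamma^{(j)} = \bigcup_{\alpha < \gamma} \Lambda_\alpha^{(j)}$ and hence $K^{k-1}(\G \restriction \Lambda_\gamma) = \varprojlim_{\alpha < \gamma} K^{k-1}(\G \restriction \Lambda_\alpha)$.

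At limit stages the coherence requirement makes the gluing automatic, so the crux is the successor step, which I expect to be the main obstacle. Passing from $\alpha$ to $\alpha+1$, the induction hypothesis (using $k \geq n$) first yields \emph{some} $z \in K^{k-1}(\G \restriction \Lambda_{\alpha+1})$ with $\delta^k z = x \restriction \Lambda_{\alpha+1}$, but $z \restriction \Lambda_\alpha$ need not equal $y^\alpha$. Their difference $w = y^\alpha - z \restriction \Lambda_\alpha$ is a $(k-1)$-cocycle over $\Lambda_\alpha$, and since $k - 1 \geq n$ the induction hypothesis gives $\lim^{k-1}(\G \restriction \Lambda_\alpha) = 0$, so $w = \delta^{k-1} u$ for some $u \in K^{k-2}(\G \restriction \Lambda_\alpha)$. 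Extending $u$ to $\tilde u \in K^{k-2}(\G \restriction \Lambda_{\alpha+1})$ by assigning the value $0$ on the new tuples and setting $y^{\alpha+1} = z + \delta^{k-1}\tilde u$, one checks using that $\delta$ commutes with restriction that $\delta^k y^{\alpha+1} = \delta^k z = x \restriction \Lambda_{\alpha+1}$ while $y^{\alpha+1} \restriction \Lambda_\alpha = z \restriction \Lambda_\alpha + w = y^\alpha$, which is exactly the coherent extension required. This completes the recursion and hence the proof.
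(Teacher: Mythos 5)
The paper states this theorem without proof, deferring to \cite{velickovic2021non}, so there is no in-paper argument to compare against; your proof is correct and is essentially the standard argument for the surjective/Mittag--Leffler strengthening of Goblot's theorem: reduce via Mitchell to a cofinal set of size $\aleph_n$, filter it continuously by directed pieces of cofinality $\leq \aleph_{n-1}$, use Mittag--Leffler together with the automatic vanishing of $\lim^k$ for $k \geq 2$ over $\omega$-indexed systems as the base case (the only point where surjectivity is invoked directly), and correct trivializations at successor stages using the inductive vanishing one degree lower. The two points you rightly flag as needing verification --- that $\delta$ commutes with restriction to the sub-poset of tuples from $\Lambda_\alpha$, and that $k-1 \geq n \geq 1$ so the correction cocycle $w$ lives in a degree where the induction hypothesis applies --- both check out, so the recursion closes.
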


If $M$ is a directed subset of the indexing set $\Lam$ we will write  $\G \restriction M$ for the system $\lB G_{\lambda} ; p^{\mu}_{\lambda} : \lambda \leq \mu \in M    \rB  $.

\begin{theorem}[Mitchell, 1973]

\label{mitchell}

Let $\G$ be an inverse system indexed by $\Lam$, let $M$ be another directed set and let $\phi: M \to \Lam$ be an order preserving cofinal map. Let moreover $\phi^\ast(\G)$ be the system indexed on $M$ defined by pre-composing the functor  $\phi$ between the two orders thought of as categories to the functor $\G$.  Then, for all $n \geq 0$, \\

\begin{center}    $     \lim^n \G \cong \lim^n \phi^* (\G).   $  \end{center}

In particular, the case where $\phi$ is an inclusion gives us that for a cofinal subset $C \sbeq \Lam$, we have $\lim^n \G \cong \lim^n \G \restriction C$.

\end{theorem}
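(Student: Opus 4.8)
The plan is to prove the isomorphism through the general machinery of derived functors, viewing an inverse system indexed by $\Lam$ as a contravariant functor $\Lam \to \mathbf{Ab}$, so that $\lim^n$ is the $n$-th right derived functor of the left-exact functor $\lim_\Lam \colon \mathbf{Ab}^{\Lam} \to \mathbf{Ab}$ (the right adjoint of the constant-diagram functor). Precomposition with $\phi$ yields a functor $\phi^\ast \colon \mathbf{Ab}^{\Lam} \to \mathbf{Ab}^{M}$ which is \emph{exact}, since kernels, cokernels and images in a functor category are computed pointwise. The first step is the base case $n = 0$, i.e.\ the natural isomorphism $\lim_\Lam \cong \lim_M \circ\, \phi^\ast$ of ordinary limit functors. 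Given a thread $(h_m)_{m \in M}$ in $\lim_M \phi^\ast(\G)$, cofinality provides for each $\lambda \in \Lam$ some $m$ with $\lambda \leq \phi(m)$, and setting $g_\lambda = p^{\phi(m)}_{\lambda}(h_m)$ defines a thread over $\Lam$ that is independent of the choice of $m$ by directedness of $M$ together with compatibility; this assignment is inverse to the obvious restriction map, yielding the desired natural isomorphism.

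With this identity in hand, I would deduce the derived statement by the acyclic-resolution method. Choose an injective resolution $\G \to I^\bullet$ in $\mathbf{Ab}^{\Lam}$. Since $\phi^\ast$ is exact, $\phi^\ast(\G) \to \phi^\ast(I^\bullet)$ is again a resolution in $\mathbf{Ab}^{M}$; if each $\phi^\ast(I^k)$ is $\lim_M$-acyclic, then this is an acyclic resolution and hence computes the right derived functors, so that
\[ \lim^n_M \phi^\ast(\G) \cong H^n\big(\lim_M \phi^\ast(I^\bullet)\big) \cong H^n\big(\lim_\Lam I^\bullet\big) \cong \lim^n_\Lam \G, \]
where the middle isomorphism is the base case applied degreewise. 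Everything therefore reduces to the \emph{key lemma}: if $\phi$ is cofinal, then $\phi^\ast$ sends injective objects of $\mathbf{Ab}^{\Lam}$ to $\lim_M$-acyclic objects of $\mathbf{Ab}^{M}$. I expect this to be the main obstacle. The natural line of attack is to reduce to an explicit cogenerating family of injectives for the functor category — the coinduced systems built from injective abelian groups placed at single indices — and to compute directly that the higher limits over $M$ of their pullbacks vanish, this computation being exactly where cofinality enters a second time.

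Alternatively, and more in the spirit of the Roos-complex computations used elsewhere in the paper, I would give a hands-on proof of the inclusion case $C \sbeq \Lam$ cofinal by analysing the restriction cochain map $r \colon K^\bullet(\G) \to K^\bullet(\G \restriction C)$ sending $\xx$ to its restriction to $C^{(n)}$. This map is degreewise surjective (extend by zero off $C^{(n)}$), so there is a short exact sequence of complexes $0 \to N^\bullet \to K^\bullet(\G) \to K^\bullet(\G \restriction C) \to 0$, where $N^\bullet$ collects the cochains vanishing on $C^{(n)}$; by the associated long exact sequence, $r$ is a quasi-isomorphism — and hence $\lim^n \G \cong \lim^n \G \restriction C$ — precisely when $N^\bullet$ is acyclic. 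One checks the case $n = 0$ by hand (any $\delta^1$-closed cochain vanishing on $C$ must vanish everywhere, by choosing a $C$-upper bound for each index), and the general acyclicity of $N^\bullet$ is again the crux, powered by cofinality through a contracting homotopy assembled from such upper bounds; the general $\phi$ is then recovered by factoring through its cofinal image. In either route the substantive difficulty is the same fact — that pulling back along a cofinal map preserves acyclicity — and coherently organising the choices of upper bounds is what I expect to be the principal technical point.
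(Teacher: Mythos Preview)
The paper does not actually supply a proof of this theorem: it is stated as a known result attributed to Mitchell (1973) and is used as a black box, so there is no ``paper's own proof'' to compare against. Your outline is therefore being assessed on its own merits.

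Your derived-functor route is the standard one and is correct in outline: exactness of $\phi^\ast$ is immediate, the $n=0$ case is the classical cofinality statement for ordinary limits, and the whole weight of the argument is exactly where you place it, namely showing that $\phi^\ast$ carries injectives of $\mathbf{Ab}^{\Lam}$ to $\lim_M$-acyclic objects. Your suggested attack via the explicit cogenerating injectives is workable; note that these are precisely the functors $F^i_a$ appearing in the appendix, and the computation that their pullbacks have vanishing higher limits over $M$ is essentially the same contractibility argument carried out there (Claim~\ref{claimneeman}), now applied to the comma category $\phi \downarrow i$, whose nonemptiness and filteredness are exactly what cofinality of $\phi$ provides. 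An alternative, slightly slicker, way to phrase the key lemma is that $\phi^\ast$ has a left adjoint (left Kan extension $\phi_!$) which, for maps between directed posets, is computed as a filtered colimit and is therefore exact; hence $\phi^\ast$ preserves injectives outright.

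Your Roos-complex route is also viable for the inclusion case; the surjectivity-by-zero-extension and the reduction to acyclicity of $N^\bullet$ are correct. One caution: the sentence ``the general $\phi$ is then recovered by factoring through its cofinal image'' is not quite a reduction to the inclusion case, since the surjection $M \twoheadrightarrow \phi(M)$ still needs its own argument (it is not an inclusion). This is not a serious obstacle---the same homotopy idea handles it---but you should not present it as a free reduction.
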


To more easily apply the last two theorems we will introduce the setting of quotient systems described in the following section and will apply it to the systems in which we are interested.

\section{The inverse system $\mathbf{A}$ and its relatives}

The main object of our interest is the inverse system $\A$. It is indexed by the set ${}^\omega \omega $ of all functions $f: \omega \to \omega $ and such that the object $A_f$ corresponding to $f$ is defined as

\[  A_f = \bigoplus_{i \in \omega} \Z^{f(i)}.               \]
The order relation on this set is everywhere domination (i.e., $f   \leq g$ if and only if $f(i) \leq g(i) $ for all $i \in \omega$). The bonding morphisms are the obvious projections. We write $I_f = \lB (i, j) \mid j \leq f(i) \rB$ for the region on which the elements of the groups seen as functions to $\Z$ are supported. Analogously, we define the system $\B$ where we replace infinite sums with infinite products:

\[ B_f = \prod_{i \in \omega} \Z^{f(i)}.  \] 
This yields a short exact sequence of inverse systems

\[  0 \to \A \to \B \to \B / \A \to 0,    \]
which induces a long exact sequence

\begin{center}

$ 0 \to  ... \to \lim^n \A \to \lim^n \B \to \lim^n \B / \A \to \lim^{n+1} \A \to ...  \,\,\,\,\,\, .  $

\end{center}

Now, \cite[Lemma 4]{mardevsic1988strong} says that $\lim^n \B = 0$ for $n \geq 1$, so that $\lim^{n+1} \A \cong \lim^n \B / \A$ and $\lim^1 \A \cong \frac{\lim^0 \B/ \A}{\lim^0 \B}$. The proof straightforwardly generalizes to restrictions to directed subsets and groups different than $\Z$. In other words, for any directed $\mathcal{F} \sbeq {}^\omega \omega$, and any nontrivial abelian group $G$, we can define $\A^{\mathcal{F}}_G$ and $\B^{\mathcal{F}}_G$ whose objects are $A_f = \bigoplus_{i \in \omega} G^{f(i)} $ and $B_f = \prod_{i \in \omega} G^{f(i)}$ respectively, for each $f \in \F$. Then the reasoning above still yields isomorphisms $\lim^1 \A^\F_G \cong \frac{\lim^0 \B^\F_G / \A^\F_G}{\lim^0 \B^\F_G} $ and $\lim^{n+1} \A^\F_G \cong \lim^n \B^\F_G / \A^\F_G$.

For the case $n > 1$, we further observe that the inclusion map $\iota : (\F , \leq ) \hookrightarrow (\F , \leq^*) $ is order-preserving and cofinal, hence by Theorem \ref{mitchell} we can further substitute for $\lim^n \B^\F_G / \A^\F_G $ the limit $\lim^n (\B^\F_G / \A^\F_G)_{\leq^\ast} $ of a system with the same quotient groups as objects and with bonding morphisms analogous to the previous ones for each instance of $\leq^*$. One can justify the same conclusions by the approach of \cite{velickovic2021non}. This last system is surjective and has the property of being indexed on the very order whose cofinality is by definition $\dfk$. This will be relevant for the use we are going to make of Theorem \ref{flasquegoblot}.

Let $=^\ast $ stand for equality except possibly on finitely many points. The isomorphism for $\lim^1 $ yields the following characterization, where one can indifferently use $\omega \times \omega$ or $\bigcup_{f \in \F} I_f$ as the domain of $\psi$.

\begin{proposition}

Let $G$ be a nontrivial group, and let $\F\sbeq {}^\omega \omega$ be directed. Then $\lim^1 A^\F_G \neq 0 $ if and only if there exists a family $\lang \phi_f: I_f \to G \mid f \in \F     \rang$ of functions such that:

\begin{enumerate}
    \item For every two $f,g \in \F$, $\phi_f \restriction I_f \cap I_g =^\ast \phi_g \restriction I_f \cap I_g $. 
    
    \item There exists no function $\psi: \omega \times \omega \to G$ such that $\psi \restriction I_f =^\ast \phi_f$ for all $f \in \F$.

    \end{enumerate}
\end{proposition}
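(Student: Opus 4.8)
The plan is to unwind the isomorphism $\lim^1 \A^\F_G \cong \lim^0(\B^\F_G/\A^\F_G)/\lim^0\B^\F_G$ recorded just above the statement, so that the whole proposition reduces to describing the two inverse limits concretely. Since the text has already established $\lim^1\B^\F_G = 0$, the long exact sequence yields $\lim^1\A^\F_G \cong \mathrm{coker}\big(\lim^0\B^\F_G \to \lim^0(\B^\F_G/\A^\F_G)\big)$, and hence $\lim^1\A^\F_G\neq 0$ is equivalent to this comparison map failing to be surjective. The rest is bookkeeping: I will identify the elements of the source and target in terms of functions $I_f\to G$, and read off conditions (1) and (2).

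First I would describe $\lim^0(\B^\F_G/\A^\F_G)$. Recall that $B_f$ is the group of all functions $I_f\to G$, that $A_f$ is the subgroup of finitely supported ones, so that $[\phi]=[\phi']$ in $B_f/A_f$ iff $\phi=^\ast\phi'$, and that the bonding map for $f\leq g$ is restriction $B_g\to B_f$ along $I_f\sbeq I_g$; the induced map on quotients sends $[\phi_g]\mapsto[\phi_g\restriction I_f]$. Hence a thread in $\lim^0(\B^\F_G/\A^\F_G)$ is a choice, for each $f\in\F$, of a coset $[\phi_f]\in B_f/A_f$ — equivalently a representative function $\phi_f:I_f\to G$ taken modulo $=^\ast$ — subject to $\phi_g\restriction I_f =^\ast \phi_f$ for all $f\leq g$. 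I would then show, using directedness of $\F$, that this comparable-pairs coherence is equivalent to the symmetric condition (1): given arbitrary $f,g\in\F$, pick $h\geq f,g$ and restrict the relations $\phi_h\restriction I_f=^\ast\phi_f$ and $\phi_h\restriction I_g=^\ast\phi_g$ to $I_f\cap I_g$; conversely (1) specializes to the comparable case because $I_f\cap I_g=I_f$ when $f\leq g$. Thus the threads of $\lim^0(\B^\F_G/\A^\F_G)$ are exactly the families $\lang\phi_f\rang$ satisfying (1).

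Next I would describe the image of $\lim^0\B^\F_G$. A thread $(\psi_f)_f\in\lim^0\B^\F_G$ is a genuinely compatible family, $\psi_g\restriction I_f=\psi_f$ for $f\leq g$; by directedness these glue to a single function $\psi:\bigcup_{f\in\F}I_f\to G$ with $\psi\restriction I_f=\psi_f$, and conversely any such $\psi$ restricts to a thread (and may be extended by $0$ to all of $\omega\times\omega$ without altering its restrictions to the $I_f$, which justifies using either domain). The comparison map sends $(\psi_f)_f$ to the family of classes $([\psi_f])_f$. Therefore a coherent family $\lang\phi_f\rang$ lies in the image precisely when there is a global $\psi$ with $\psi\restriction I_f=^\ast\phi_f$ for every $f$, and negating this is exactly condition (2). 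Combining the two descriptions, the cokernel is nonzero iff there is a family satisfying (1) that does not lie in the image, i.e. one satisfying (1) and (2), which is the claim.

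I expect the only genuinely delicate point to be the back-and-forth between the comparable-pairs coherence dictated by the bonding maps and the symmetric intersection condition (1), together with careful handling of the $=^\ast$ relation inside the quotients $B_f/A_f$; everything else is a direct translation of the definitions of inverse limit and cokernel.
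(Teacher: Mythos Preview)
Your proposal is correct and is exactly the unwinding the paper has in mind: the paper does not give a separate proof of this proposition but simply states that ``the isomorphism for $\lim^1$ yields the following characterization,'' together with the remark that one may use either $\omega\times\omega$ or $\bigcup_{f\in\F} I_f$ as the domain of $\psi$. Your write-up supplies precisely those details---identifying threads in $\lim^0(\B^\F_G/\A^\F_G)$ with families satisfying (1) via directedness, and the image of $\lim^0\B^\F_G$ with the negation of (2)---so there is nothing to add.
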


We call a family that satisfies the first condition \textit{coherent}. If it satisfies the second condition we say moreover that it is \textit{nontrivial}.

\begin{remark}

\label{remarkcardinality}

Note that $\lim^1 \A^\F_G \neq  0$ implies $\lim^1 \A^\F_{G'} \neq 0$ whenever $\vert G \vert \leq \vert G' \vert$, so that the vanishing of $\lim^1 \A^\F_G$ is equivalent for groups of the same cardinality, while the group structure is relevant for higher limits. This is because the equation $a-b= 0$ reduces to the logical notion of equality $a=b$, so that a witness for  $\lim^1 \A^\F_G \neq 0$ can be seen modulo injection as a witness for $\lim^1 \A^\F_{G'} \neq 0$, for any $G'$ of greater or equal size, in contrast, there is an inescapable algebraic aspect to longer alternating sums.

\end{remark}

The characterization for higher derived limits can be expressed in terms of $n$-coherent and $n$-nontrivial families.

\begin{proposition}

\label{ncornontriv}

For $n\geq 1$, $\lim^{n+1} \A^\F_G \neq 0$ if and only if there exists a family $\xx \in ((\B^\F_G / \A^\F_G)_{\leq^\ast})^{(n)}$ such that $\delta^{n+1} (\xx) = 0$ and such that for no $\yy \in  ((\B^\F_G / \A^\F_G)_{\leq^\ast})^{(n-1)}$ we have $\delta^n(\yy) = \xx$. 
    
\end{proposition}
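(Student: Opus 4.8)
The plan is to deduce the statement purely from material already assembled in this section: chain the isomorphisms recorded above, invoke the Roos characterization, and then unwind the definition of cohomology. First I would observe that, since $n \geq 1$, the relevant segment of the long exact sequence is $\lim^n \B^\F_G \to \lim^n \B^\F_G / \A^\F_G \to \lim^{n+1} \A^\F_G \to \lim^{n+1} \B^\F_G$, whose two outer terms both vanish by the generalization of \cite[Lemma 4]{mardevsic1988strong} to restrictions and arbitrary coefficient groups; hence the central map is an isomorphism $\lim^{n+1} \A^\F_G \cong \lim^n \B^\F_G / \A^\F_G$. Next, applying Theorem \ref{mitchell} to the cofinal order-preserving inclusion $\iota : (\F, \leq) \hookrightarrow (\F, \leq^\ast)$ rewrites the right-hand side as $\lim^n (\B^\F_G / \A^\F_G)_{\leq^\ast}$, and the Roos theorem $\lim^n \G \cong H^n(K^\bullet)$ identifies this with $H^n\bigl(K^\bullet((\B^\F_G / \A^\F_G)_{\leq^\ast})\bigr)$.

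It then remains to translate the condition $H^n \neq 0$ into the combinatorial language of the statement. By definition $H^n(K^\bullet) = \ker \delta^{n+1} / \operatorname{im} \delta^n$, and the cochain group $K^n((\B^\F_G / \A^\F_G)_{\leq^\ast})$ is precisely the group $((\B^\F_G / \A^\F_G)_{\leq^\ast})^{(n)}$ named in the proposition. Thus $H^n \neq 0$ if and only if some $\xx \in ((\B^\F_G / \A^\F_G)_{\leq^\ast})^{(n)}$ lies in $\ker \delta^{n+1}$ but not in $\operatorname{im} \delta^n$. The first clause says exactly that $\delta^{n+1}(\xx) = 0$, i.e. that $\xx$ is $n$-coherent, and the second says exactly that there is no $\yy \in ((\B^\F_G / \A^\F_G)_{\leq^\ast})^{(n-1)}$ with $\delta^n(\yy) = \xx$, i.e. that $\xx$ is $n$-nontrivial. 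This is the desired equivalence.

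Because every ingredient is already in hand, I do not expect a genuine obstacle; the work is entirely in the bookkeeping, and two points deserve care. The first is the degree range: one must verify that both $\lim^n \B^\F_G$ and $\lim^{n+1} \B^\F_G$ vanish, which is exactly why the hypothesis $n \geq 1$ (and not merely $n \geq 0$) is imposed and why the case of $\lim^1$ is handled separately by the preceding proposition, where a correction term $\lim^0 \B^\F_G$ survives in the quotient. The second is a compatibility check: one must confirm that the indexing set $\Lambda^{(n)}$ entering the Roos complex of $(\B^\F_G / \A^\F_G)_{\leq^\ast}$ consists of the $\leq^\ast$-ordered $(n+1)$-tuples implicit in the notation $((\B^\F_G / \A^\F_G)_{\leq^\ast})^{(n)}$, so that the cochain groups and the coboundary maps $\delta^n$ coincide on the nose with those in the statement.
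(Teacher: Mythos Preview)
Your proposal is correct and follows precisely the route the paper takes: the paper does not give a separate proof of this proposition but assembles exactly the chain of isomorphisms you describe (long exact sequence plus vanishing of $\lim^n \B^\F_G$, then Mitchell's theorem to pass to $\leq^\ast$, then the Roos characterization) in the paragraphs immediately preceding the statement, so that the proposition is simply the resulting summary. Your attention to why $n\geq 1$ is needed and to the matching of indexing conventions is appropriate and matches the paper's implicit reasoning.
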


\section{Unbounded nontriviality}

Before we start the proof by induction of the main result of this section, we need a lemma.

\begin{definition}

Let $(P, \leq)$ be a quasi-order. We say that $C \sbeq P$ is a $\kappa$-\textit{chain} if there is a bijection $\sigma: \kappa \to C$ such that $\alpha \leq \beta$ implies $\sigma(\alpha) \leq \sigma(\beta)$.
    
\end{definition}

\begin{remark}
\label{remarkstrictchain}
If $\kappa$ is regular and $C$ is unbounded in $P$, then we can find $C' \sbeq C$ ordered by the strict order $<$ such that $a < b \leftrightarrow (a \leq b \et \neg (a \geq b)) $.
    
\end{remark}

\begin{lemma}

\label{increasingunion}

Let $n \geq 0$ and $(P, \leq) $ be a quasi-order such that $\cof(P) = \aleph_{n+1}$ and there exists an $\omega_{n+1}$-chain $C$ that is unbounded in $P$. Let moreover $P$ be equipped with a function $d: P^2 \to P$ such that:

\begin{enumerate}

\item $d(x, y) \geq x, y$ (witnessing the directedness of $P$);

\item $d(x', y') \leq d(x, y) $ whenever $x' \leq x$ and $y' \leq y$. 

\end{enumerate}

Then we can find a continuous increasing union 

\[ P = \bigcup_{\alpha < \omega_{n+1}} P_\alpha,   \]
and a $C' \sbeq C$ cofinal in $C$ such that for each $\alpha < \omega_{n+1} $, $P_\alpha$ is $d$-closed, downward closed, with  $\cof(P_\alpha) \leq \aleph_n$, $\mathrm{otp}(P_\alpha \cap C') = \alpha$ and $ P_\alpha \cap C'$ unbounded in $P_\alpha$ for every limit $\alpha$.
    
\end{lemma}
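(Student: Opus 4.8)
The plan is to build the filtration from a continuous increasing chain of elementary submodels and then extract $C'$ from the associated club. Fix a sufficiently large $H(\theta)$ with a fixed well-order, and by Remark \ref{remarkstrictchain} let $\sigma: \omega_{n+1} \to C$ be a \emph{strictly} increasing enumeration of (a cofinal strict sub-chain of) $C$. Since $\cof(P) = \aleph_{n+1}$, fix a cofinal $D \subseteq P$ with $|D| = \aleph_{n+1}$. I would construct a continuous increasing chain $\langle M_\alpha : \alpha < \omega_{n+1}\rangle$ of elementary submodels of $H(\theta)$, each of size $\aleph_n$, with $\omega_n \subseteq M_0$, with $P, \leq, d, C, \sigma, D \in M_0$, and with $M_\beta \in M_{\alpha+1}$ for $\beta \leq \alpha$; by the standard argument (passing to the club of $\alpha$ where this holds and re-indexing by order type, which preserves continuity) I may assume $\delta_\alpha := M_\alpha \cap \omega_{n+1}$ is an ordinal for every $\alpha$, so that for $\beta < \omega_{n+1}$ one has $\beta \in M_\alpha \iff \beta < \delta_\alpha$, and $\delta_\alpha$ is strictly increasing and continuous. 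I then \emph{define} $P_\alpha$ to be the downward closure of $P \cap M_\alpha$.

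The routine properties fall out immediately. $P_\alpha$ is downward closed by fiat. It is $d$-closed: if $y_1, y_2 \in P_\alpha$, say $y_i \leq x_i$ with $x_i \in P \cap M_\alpha$, then $d(x_1, x_2) \in P \cap M_\alpha$ by elementarity, and property (2) gives $d(y_1, y_2) \leq d(x_1, x_2)$, so $d(y_1, y_2) \in P_\alpha$. Since $P \cap M_\alpha$ is cofinal in $P_\alpha$ and has size at most $\aleph_n$, we get $\cof(P_\alpha) \leq \aleph_n$. Increasingness and continuity are inherited from the $M_\alpha$ (downward closure commutes with unions). Finally $\bigcup_\alpha P_\alpha = P$: setting $M = \bigcup_\alpha M_\alpha$ we have $\omega_{n+1} \subseteq M$, so enumerating $D$ by some bijection $e \in M_0$ shows $D = \{e(\beta) : \beta < \omega_{n+1}\} \subseteq M$; as $D$ is cofinal, the downward closures cover $P$.

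The heart of the matter is the exact condition $\mathrm{otp}(P_\alpha \cap C') = \alpha$. Because $C$ is a chain and $P_\alpha$ is downward closed, $C \cap P_\alpha$ is an initial segment of $(C, \leq)$, hence of the form $\sigma[\gamma_\alpha]$. The claim is that $\gamma_\alpha = \delta_\alpha$, i.e. $C \cap P_\alpha = \sigma[\delta_\alpha]$. The inclusion $\supseteq$ is clear since $\sigma(\beta) \in M_\alpha$ for $\beta < \delta_\alpha$. For $\subseteq$, take $x \in P \cap M_\alpha$ and consider the initial segment $\{\beta : \sigma(\beta) \leq x\}$, which has length $\eta_x$. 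Here the hypothesis that $C$ is \emph{unbounded} is essential: were $\eta_x = \omega_{n+1}$, then $x$ would bound all of $C$, contradicting unboundedness; so $\eta_x < \omega_{n+1}$, and since $\eta_x$ is definable from $x, \sigma, C$ in $M_\alpha$ we get $\eta_x \in M_\alpha$, hence $\eta_x < \delta_\alpha$. Thus no $\sigma(\beta)$ with $\beta \geq \delta_\alpha$ lies below any element of $P \cap M_\alpha$, proving $\gamma_\alpha = \delta_\alpha$. Now set $C' := \{\sigma(\delta_\alpha) : \alpha < \omega_{n+1}\}$, which is cofinal in $C$ because $\langle \delta_\alpha\rangle$ is cofinal in $\omega_{n+1}$ and $\sigma$ is increasing. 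Since $\sigma(\delta_\beta) \in P_\alpha \iff \delta_\beta < \delta_\alpha \iff \beta < \alpha$, we obtain $P_\alpha \cap C' = \{\sigma(\delta_\beta) : \beta < \alpha\}$, of order type exactly $\alpha$.

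It remains to check that $P_\alpha \cap C'$ is unbounded in $P_\alpha$ for limit $\alpha$. Given $x \in P_\alpha$, pick $x_0 \in P \cap M_\alpha$ with $x \leq x_0$; by continuity $x_0 \in M_\beta$ for some $\beta < \alpha$. Since $\gamma_\beta = \delta_\beta$, we have $\sigma(\delta_\beta) \notin P_\beta$, so $\sigma(\delta_\beta) \not\leq x_0$, whence $\sigma(\delta_\beta) \not\leq x$; as $\sigma(\delta_\beta) \in P_\alpha \cap C'$, the element $x$ is not an upper bound, giving unboundedness. I expect the single genuinely delicate point to be exactly the identity $C \cap P_\alpha = \sigma[\delta_\alpha]$: one must verify that forming the downward closure of $P \cap M_\alpha$ introduces no element of the chain beyond $\sigma[\delta_\alpha]$, and this is precisely the place where unboundedness of $C$ (rather than mere existence of a long chain) does the work.
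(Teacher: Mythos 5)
Your proof is correct, but it reaches the filtration by a genuinely different mechanism than the paper. The paper builds the $P_\alpha$ by an explicit mutual transfinite recursion: it fixes an enumeration $c$ of a cofinal subset of $P$, lets $P_\alpha$ be the downward closure of the $d$-closure of $c[\alpha]\cup b[\alpha]$, and defines $b(\alpha)$ as the least element of $C\setminus P_\alpha$; the only nontrivial point is that $b$ is well-defined, which follows because a cofinal subset of $P_\alpha$ of size $\leq\aleph_n$ cannot dominate an unbounded $\omega_{n+1}$-chain (one picks, for each of the $\aleph_n$ generators, a chain element it fails to dominate, and takes their supremum in the chain). You instead take $P_\alpha$ to be the downward closure of $P\cap M_\alpha$ for a continuous $\in$-increasing chain of elementary submodels of size $\aleph_n$, so that $d$-closure, the cofinality bound, and the identification of the escape point $\sigma(\delta_\alpha)$ all come from elementarity; your verification that $C\cap P_\alpha=\sigma[\delta_\alpha]$ (via $\eta_x\in M_\alpha$, hence $\eta_x<\delta_\alpha$) is exactly the submodel incarnation of the paper's cardinality argument, and your use of unboundedness to force $\eta_x<\omega_{n+1}$ is the same combinatorial core. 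The trade-off is the usual one: the paper's recursion is more elementary and self-contained, while your argument outsources the bookkeeping (closure under $d$, continuity, the exact order type $\alpha$) to standard submodel facts at the cost of the re-indexing preliminaries. One cosmetic slip: after invoking Remark \ref{remarkstrictchain} you should consistently write $C_0\cap P_\alpha$ (the strict cofinal sub-chain enumerated by $\sigma$) rather than $C\cap P_\alpha$, but since your final $C'$ is extracted from that sub-chain and cofinality in $C$ is transitive, nothing is affected.
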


\begin{proof}

Let $c: \omega_{n+1} \to P$ be an enumeration of a cofinal subset of $P$, and assume without loss of generality, thanks to Remark \ref{remarkstrictchain}, that $C$ is a chain of order-type $\omega_{n+1}$ for the strict order relation.

We construct by mutual transfinite recursion the increasing union and a function $b: \omega_{n+1} \to C$ such that $b(\alpha) $ is the least element of $C \setminus P_\alpha$, where $P_\alpha$ is the downward closure of the $d$-closure of $c[\alpha] \cup b[\alpha]$ (note that it stays $d$-closed by condition 2). This is increasing and continuous by construction. The $d$-closure of $c[\alpha] \cup b[\alpha]$ is cofinal in $P_\alpha$ and of cardinality $\leq \aleph_n$. For each element in it, we can find one in the chain that is not dominated by it; their supremum (which exists because the chain has cofinality $> \aleph_n$) will not be in the $d$-closure $P_\alpha$. This shows that $b$ is well-defined, strictly increasing, and hence cofinal. Finally, we put $C' = b[\omega_{n+1}]$. The conditions on the order-type and unboundedness are immediate by construction.
\end{proof}

\begin{remark}
    
\label{remarkunbounded}

Note that, for every limit $\lambda$, $\cof(P_\lambda) \geq \cof(\lambda)$. This follows from the fact that for every limit $\lambda < \omega_{n+1} $,  $C' \cap P_\lambda$ is unbounded in $P_\lambda$. Otherwise, for each element in a cofinal set of lesser cardinality we pick one of the chain that is not dominated by it, and taking the supremum we get a contradiction, as in the previous proof.

\end{remark}

\begin{remark}

In case $P= {}^\omega \omega $, one natural choice for the function $d$ is the lowest upper bound $\vee : ({}^\omega \omega)^2 \to {}^\omega \omega   $ defined as $(f \vee g)(i) = \max \lB f(i) , g(i) \rB  $. The same is true for every subset $P \sbeq {}^\omega \omega$ that is $\vee$-closed. 
    
\end{remark}

The proof in \cite{velickovic2021non} served as a basis for the following argument. The central idea allowing the present advances is the construction of some nontriviality for the base case having the additional property of being ``localized", meaning that the restriction of a larger coherent family to a certain chain is nontrivial, and the observation that this additional property is preserved in the induction. 

\subsection{Base case}

To get a result that holds for all nontrivial groups $G$, we need an extra cardinal arithmetic assumption for the base case.

\begin{lemma}

\label{basecase}

Assume that $2^{\aleph_0} < 2^{\aleph_1}$. Let $P \sbeq ({}^\omega \omega , \leq^\ast ) $ be $\vee$-closed and such that $\cof(P) = \aleph_1$ and there exists an $\omega_1$-chain $C$ which is unbounded in $P$, and let $G \neq 0$ be an abelian group. Then there is a coherent 1-family $\Phi = \lang \phi_f: I_f \to G \mid f \in P \rang $ such that $\Phi \restriction C$ is (coherent) nontrivial.

\end{lemma}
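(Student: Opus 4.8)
The plan is to reduce to the two-element group, to build a coherent family along the chain $C$ whose restriction to $C$ is nontrivial by a recursion driven by the weak diamond (which is exactly the hypothesis $2^{\aleph_0}<2^{\aleph_1}$, by Devlin--Shelah), and finally to extend that family coherently to all of $P$ without disturbing its values on $C$. For the reduction, fix two distinct elements $0,a\in G$; by Remark \ref{remarkcardinality} a coherent family with values in $\{0,a\}$ whose restriction to $C$ is nontrivial yields the conclusion for $G$, so I may build $\Phi$ with values in $\{0,1\}=\Z/2\Z$. The payoff is that any candidate trivialization $\psi$ may be taken two-valued: the set on which a trivializing $\psi$ takes a value outside $\{0,1\}$ meets each $I_f$ in a finite set, so resetting $\psi$ to $0$ there produces a two-valued trivialization with the same restrictions up to $=^\ast$. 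Hence at most $2^{\aleph_0}$ trivializations are relevant, which is precisely the regime in which the weak diamond operates.

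For the construction along $C$, enumerate $C=\{f_\alpha\mid\alpha<\omega_1\}$ as a strict $<^\ast$-chain (Remark \ref{remarkstrictchain}) and define $\phi_\alpha\colon I_{f_\alpha}\to\{0,1\}$ by recursion. At a limit $\alpha$ the initial segment $\langle\phi_\beta\mid\beta<\alpha\rangle$ is a countable coherent family, hence trivial, since $\lim^1$ of the (surjective) system restricted to a countable directed set vanishes by Theorem \ref{flasquegoblot}; fixing a trivialization $\psi^\ast$ I put $\phi^\ast_\alpha=\psi^\ast\restriction I_{f_\alpha}$. Because $f_\beta<^\ast f_\alpha$ for each $\beta<\alpha$, the set $I_{f_\alpha}\setminus I_{f_\beta}$ is infinite, so diagonalizing along a sequence cofinal in $\alpha$ I may fix an infinite $S_\alpha\subseteq I_{f_\alpha}$ with $S_\alpha\cap I_{f_\beta}$ finite for every $\beta<\alpha$; modifying $\phi^\ast_\alpha$ only on $S_\alpha$ therefore preserves coherence with all earlier $\phi_\beta$. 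I then let $\phi_\alpha$ agree with $\phi^\ast_\alpha$ off $S_\alpha$ and, on $S_\alpha$, equal either $\phi^\ast_\alpha\restriction S_\alpha$ (option $0$) or its pointwise complement $1-\phi^\ast_\alpha\restriction S_\alpha$ (option $1$), the choice being a single bit $g(\alpha)\in\{0,1\}$. The two options differ on all of the infinite set $S_\alpha$, so a two-valued $\psi$ can satisfy $\psi\restriction S_\alpha=^\ast(\text{option }c)$ for at most one $c$. To fix $g$, define a colouring $F\colon{}^{<\omega_1}H\to 2$, where $H\subseteq{}^\omega 2\times{}^\omega 2$ has size $2^{\aleph_0}$ and codes a pair (a function $\phi_\beta$, a two-valued candidate $\psi$): on a sequence of length $\alpha$ recording $\langle\phi_\beta\mid\beta<\alpha\rangle$ and $\psi$, let $F$ output the \emph{defeating} bit, i.e.\ the option that $\psi$ does not match at stage $\alpha$ (and $0$ if $\psi$ matches neither), at limit $\alpha$, and $0$ at successors. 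Since $F$ depends only on the coded sequence, with no circular reference to $g$, the weak diamond yields $g$ with $\{\alpha\colon F(h\restriction\alpha)=g(\alpha)\}$ stationary for every $h\colon\omega_1\to H$. Running the recursion with this $g$ and coding the actual construction together with a purported trivialization $\psi$ as $h_\psi$, stationarily many limits $\alpha$ satisfy $g(\alpha)=F(h_\psi\restriction\alpha)$, so at such $\alpha$ the value $\phi_\alpha\restriction S_\alpha$ is exactly the option $\psi$ fails to match, whence $\psi\restriction S_\alpha\neq^\ast\phi_\alpha\restriction S_\alpha$, contradicting $\psi\restriction I_{f_\alpha}=^\ast\phi_\alpha$. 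Thus $\Phi\restriction C$ is nontrivial.

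It remains to extend $\langle\phi_f\mid f\in C\rangle$ to a coherent family on all of $P$. Choose a $\vee$-closed cofinal $D\subseteq P$ of size $\aleph_1$ with $C\subseteq D$ (take the $\vee$-closure of a cofinal set of size $\aleph_1$ together with $C$), and write $D=\bigcup_{\alpha<\omega_1}Q_\alpha$ as a continuous increasing union of countable directed sets with each $Q_\alpha\cap C$ an initial segment of $C$. At stage $\alpha$ the family already defined on $Q_\alpha$ is countable and coherent, hence trivial by Theorem \ref{flasquegoblot}; fixing a trivialization $\psi_\alpha$, define $\phi_f=\psi_\alpha\restriction I_f$ for the new $f\in Q_{\alpha+1}\setminus Q_\alpha$. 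A later trivialization $\psi_{\alpha'}$ trivializes everything defined before it, which keeps the whole family coherent across stages, and since the values on $C$ are never redefined, the restriction to $C$ stays nontrivial. Finally, as $D$ is cofinal in $P$, for $f\in P\setminus D$ pick $d\in D$ with $f\leq^\ast d$ and set $\phi_f=\phi_d\restriction I_f$; the $\vee$-closure of $D$ makes this independent of $d$ up to $=^\ast$ and coherent with the rest.

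The main obstacle is the weak-diamond step. One must set up the colouring $F$ so that it is a function of the coded sequence alone and not circularly of the guess $g$, while arranging that the pool of relevant trivializations has size at most $2^{\aleph_0}$; this is exactly what the reduction to a two-valued family secures, and it is why $2^{\aleph_0}<2^{\aleph_1}$ suffices in place of $\mathrm{CH}$. A secondary point needing care is that each free region $S_\alpha$ is genuinely infinite, so that a single guessed bit flips infinitely many values and truly defeats $\psi$ at the stage in question; this is where the strictness of the chain, giving $I_{f_\alpha}\setminus I_{f_\beta}$ infinite, is essential.
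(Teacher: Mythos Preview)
Your reduction to $\Z/2\Z$ and the branching construction along the chain are sound, and invoking the Devlin--Shelah weak diamond in place of a direct counting argument is a legitimate alternative. The gap is in the third paragraph, where you extend the family from $C$ to $D$ and then to $P$.

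A coherent family on an unbounded but not cofinal chain need not extend to a coherent family on the ambient directed set, and your recursion does not establish that it does here. At stage $\alpha$ you assume the family on $Q_\alpha$ is coherent, trivialize with $\psi_\alpha$, and set $\phi_g=\psi_\alpha\restriction I_g$ for the new $g\in Q_{\alpha+1}\setminus Q_\alpha$ with $g\notin C$. But $f_\alpha\in Q_{\alpha+1}\setminus Q_\alpha$ already carries its value $\phi_{f_\alpha}$ from the first phase, and you must check that $\phi_{f_\alpha}$ coheres with these new $\phi_g$, i.e.\ that $\phi_{f_\alpha}$ and $\psi_\alpha$ agree up to $=^\ast$ on $I_{f_\alpha}\cap I_g$. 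The only control you have is that both agree with $\phi_{f_\beta}$ on $I_{f_\beta}$ for $\beta<\alpha$; however $I_{f_\alpha}\cap I_g=I_{f_\alpha\wedge g}$ need not be $\subseteq^\ast I_{f_\beta}$ for any $\beta<\alpha$, precisely because $C$ is only unbounded, not cofinal, in $Q_{\alpha+1}$. So the inductive hypothesis does not propagate, and the sentence ``a later trivialization $\psi_{\alpha'}$ trivializes everything defined before it'' assumes the very coherence that is in question. The same obstruction appears already one step earlier when you need $\phi_{f_\alpha}$ to cohere with the $\phi_g$ for $g\in Q_\alpha\setminus C$.

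The paper sidesteps this by never separating the phases: it applies Lemma~\ref{increasingunion} to write $P=\bigcup_\alpha P_\alpha$ and builds the coherent family on all of $P$ in a single recursion, at each successor step trivializing $P_\beta$ by some $\psi^s$ and defining the \emph{entire} family on $P_{\beta+1}$ (including the value at the new chain element $g_\beta$) as restrictions of $\psi^s$, with one free bit on a set $X_\beta\subseteq I_{g_\beta}$ almost disjoint from every $I_f$ with $f\in P_\beta$. Nontriviality on $C$ then follows from a straight counting argument ($2^{\aleph_1}$ branches versus $2^{\aleph_0}$ candidate trivializations), without explicitly invoking $w\Diamond$. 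Your approach becomes correct if you interleave the two phases in the same way---build on the decomposition of $P$ (or of a cofinal $D$) from the start and make the weak-diamond choice at the single chain element added at each stage---but that is then essentially the paper's construction with $w\Diamond$ in place of counting.
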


\begin{proof}

By the reasoning that shows Remark \ref{remarkcardinality}, we can restrict to the case $G = \Z/2\Z$.

Apply Lemma \ref{increasingunion} with respect to $d = \vee$ to get a continuous increasing union $\bigcup_{\alpha < \omega_1} P_\alpha = P $ and $C'$ satisfying the lemma. Recall in particular that the $\alpha$-th element of $C'$ (henceforth $g_\alpha$) is not in the downward-closed set $P_\alpha$, and $\cof(P_\alpha) \leq \omega$. So let $\lB \hat{f}_{\alpha,n} \mid n < \omega \rB $ be a cofinal subset of $P_\alpha$. Since $P_\alpha$ is $\vee $-directed we can let $f_{\alpha,n} = \bigvee_{i \leq n}  \hat{f}_{\alpha, i}$ so that $\lang f_{\alpha, n} \mid n < \omega \rang $ is cofinal for $\leq^\ast$ and a chain with respect to $\leq$. 

Since $g_\alpha \not \leq^\ast f_{\alpha,n} $, the set $I_{g_\alpha} \setminus I_{f_{\alpha,n}}$ is infinite (in particular non-empty) for all $n$. Pick $x_{\alpha, n} \in I_{g_\alpha} \setminus I_{f_{\alpha, n}} $. Then $X_\alpha = \lB x_{\alpha, n} \rB_{n < \omega} \sbeq I_{g_\alpha}$ is infinite and almost disjoint from all of the $I_{f_{\alpha, n}} $ as $X_\alpha \cap I_{f_{\alpha, n}} \sbeq \lB x_{\alpha, 0} , ... , x_{\alpha, n-1} \rB$.

We are now going to define a tree of coherent families ordered by end-extension, such that among the unions of the branches one yields a nontrivial family by counting arguments.

For all $s \in 2^{< \omega_1} $ we iteratively construct a coherent family $\Phi^s$ on $P_\alpha$, where $\textrm{dom}(s) =\alpha$ as follows. If $\alpha \in \lim(\omega_1)$, then $\Phi^s = \bigcup_{\alpha < \textrm{dom}(s)} \Phi^{s \restriction \alpha}$. Suppose now that $\alpha = \beta +1$, we have already defined $\Phi^s$ with $\textrm{dom}(s) = \beta$ and we want to define $\Phi^{s, 0}$ and $\Phi^{s,1}$. So we find a trivialization $\psi^s$ of $\Phi^s$ that exists since $P_\beta$ is of countable cofinality, so we can consider its cofinal chain $\lang f_{\beta, n} \mid n < \omega \rang $ defined above; it is then easy to see that we can inductively find finite sets $F_n $ such that for all $m \leq n $ we have that $\phi^s_{f_{\beta, m}} \restriction (I_{f_{\beta, m}} \setminus F_m )$ and $ \phi^s_{f_{\beta, n}} \restriction (I_{f_{\beta, n}} \setminus F_n )$ coincide on their common domain. We can then define 

\[   \psi^s (x) = \begin{cases}
   \phi^s_{f_{\beta, n}} \,\,\,\,  \textrm{if} \, x \in (I_{f_{\beta,n}} \setminus F_n ) \,\, \textrm{for some} \, n   \\

   0 \,\,\,\,  \textrm{otherwise}.

\end{cases}             \]
Then we let $\psi^{s,0} = \psi^s$ and
\[   \begin{cases}    \psi^{s,1} (x) = \psi^s (x) +1  \,\,\,\, \textrm{if} \, x \in X_\beta 

\\

\psi^s (x)   \,\,\,\,  \textrm{otherwise}. 

\end{cases}        \]
Finally, we let $\Phi^{s,0} = \delta^1(\psi^{s,0}) $ and $\Phi^{s,1} = \delta^1 (\psi^{s,1})$. 

Now for all $h: \omega_1 \to  2$ we put $\Phi^h = \bigcup_{\alpha < \omega_1} \Phi^{h \restriction \alpha}$. Note that if $h \neq h'$, then $\phi^h_{g_\gamma} \neq^\ast \phi^{h'}_{g_\gamma}$ where $\gamma$ is such that $h(\gamma) \neq h'(\gamma)$. But then the same trivialization cannot trivialize both $\Phi^h \restriction C'$ and $\Phi^{h'} \restriction C'$. Since there are only $2^{\aleph_0}$ many putative trivializations and we are assuming that $2^{\aleph_0} < 2^{\aleph_1}$, there must be $\overline{h}: \omega_1 \to 2$ such that $\Phi^{\overline{h}} \restriction C'$ is nontrivial. Hence, by Theorem \ref{mitchell}, $\Phi^{\overline{h}} \restriction C$ is also coherent nontrivial. 
\end{proof}

\begin{remark}

\label{remarkzeta}

In case $G$ is infinite, the cardinal arithmetic assumption $2^{\aleph_0} < 2^{\aleph_1}$ is not needed, as one can simply apply the reasoning of \cite[Proposition 3.2]{velickovic2021non} with the sets $X_\alpha$ above instead of the sets $b_\xi$ of the cited paper, after fixing a bijection between $\omega \times \omega $ and $\omega$, and an injection of the latter into $G$.
    
\end{remark}

\subsection{Induction step}

\begin{definition}
We say that $R(n,G)$ holds for some group $G$ if for every $\vee$-closed $P_n \sbeq ({}^\omega \omega , \leq^\ast) $ with $\cof(P_n) = \aleph_n$ that contains an $\omega_n$-chain $C_n$ that is unbounded in it, there exists an $n$-coherent family on $P_n$ for $G$ such that its restriction to $C_n$ is nontrivial.  

\end{definition}

Whenever $S$ is a stationary set, $w \Diamond (S)$ denotes a guessing principle called \textit{weak diamond} of $S$, introduced in \cite{devlin1978weak}. Recall that $S^n_{n+1} = \lB \alpha < \omega_{n+1} \mid \cof(\alpha) = \omega_n \rB $.

\begin{lemma}

\label{lemmainduction}

Suppose $n \geq 1$, and both $R(n, G)$ and  $w \Diamond(S^n_{n+1})$ hold. Then $R(n+1, G) $ holds.
    
\end{lemma}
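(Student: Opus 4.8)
The plan is to carry out an induction step that lifts nontriviality concentrated on a chain from dimension $n$ to dimension $n+1$, mirroring the standard $\Zig_n$-style argument but tracking the ``localized'' nontriviality on the chain. Given a $\vee$-closed $P_{n+1} \sbeq ({}^\omega \omega, \leq^\ast)$ with $\cof(P_{n+1}) = \aleph_{n+1}$ and an unbounded $\omega_{n+1}$-chain $C_{n+1}$, I would first apply Lemma \ref{increasingunion} with $d = \vee$ to obtain a continuous increasing union $P_{n+1} = \bigcup_{\alpha < \omega_{n+1}} P_\alpha$ together with a cofinal $C' \sbeq C_{n+1}$, where each $P_\alpha$ is $\vee$-closed, downward closed, of cofinality $\leq \aleph_n$, with $\mathrm{otp}(P_\alpha \cap C') = \alpha$ and $P_\alpha \cap C'$ unbounded in $P_\alpha$ at limits. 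By Remark \ref{remarkunbounded}, at stages $\alpha$ of cofinality $\omega_n$ the slice $P_\alpha$ itself carries an unbounded $\omega_n$-chain (namely $P_\alpha \cap C'$), so that $R(n, G)$ applies to it.

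Next I would build, by transfinite recursion along $\alpha < \omega_{n+1}$, an $(n+1)$-coherent family on each $P_\alpha$, coherent under end-extension, so that the union is $(n+1)$-coherent on $P_{n+1}$. At successor and limit stages where the cofinality is below $\omega_n$, Surjective Goblot's Theorem \ref{flasquegoblot} (applied to the relevant quotient system indexed on $P_\alpha$, whose cofinality is $\leq \aleph_n$) guarantees that any $(n+1)$-coherent family restricted to $P_\alpha$ is $(n+1)$-trivial, so I can choose a trivialization and extend. The crux is the set $S^n_{n+1}$ of stages $\alpha$ with $\cof(\alpha) = \omega_n$: there $R(n,G)$ produces an $n$-coherent family on $P_\alpha$ whose restriction to the chain $P_\alpha \cap C'$ is nontrivial, and I would splice this obstruction into the construction so that trivializing the $(n+1)$-family at such a stage would force trivializing the embedded $n$-dimensional obstruction on the chain. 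This is where $w\Diamond(S^n_{n+1})$ enters: at each stage in $S^n_{n+1}$ I make a binary choice (as in the base-case tree, adding the $n$-coherent obstruction or not), and the weak diamond provides a guessing function defeating any single candidate trivialization, ensuring the final union $\Phi$ restricted to $C_{n+1}$ admits no global trivialization.

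The main obstacle I expect is the bookkeeping that makes the $w\Diamond$ guessing interact correctly with the $n$-dimensional obstructions from $R(n,G)$. Concretely, I must encode the relevant data at stage $\alpha$ --- a putative partial trivialization of $\Phi \restriction C_{n+1}$ up to $\alpha$, together with the $n$-coherent nontrivial family furnished by $R(n,G)$ on $P_\alpha \cap C'$ --- as a single element of $2^\alpha$ (or a coded structure on $\alpha$), so that the weak diamond colouring can be evaluated and the two successors $\Phi^{s,0}, \Phi^{s,1}$ differ precisely in a way that $w\Diamond(S^n_{n+1})$ can distinguish. I would then argue that for any hypothetical global trivialization $\psi$ of $\Phi \restriction C_{n+1}$, the set of $\alpha \in S^n_{n+1}$ at which $\psi$ restricts to the guessed value is stationary, but at each such $\alpha$ the guessed value would trivialize the localized $n$-dimensional obstruction on $P_\alpha \cap C'$, contradicting its nontriviality from $R(n,G)$. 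Verifying that local trivializations genuinely glue --- i.e.\ that agreement modulo $=^\ast$ at each slice propagates to a global trivialization so that a global one must restrict compatibly --- together with the coherence of the recursion across limit stages, is the delicate part; the algebraic (rather than purely equality-based) nature of the alternating sums for $n \geq 1$, flagged in Remark \ref{remarkcardinality}, means I cannot simply count trivializations as in the base case and must instead rely on the guessing strength of $w\Diamond$.
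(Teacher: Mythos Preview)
Your proposal is correct and follows essentially the same route as the paper: decompose $P_{n+1}$ via Lemma~\ref{increasingunion}, build a binary tree of coherent extensions by using Goblot's theorem to trivialize at each stage and splicing in the $R(n,G)$-obstruction at $S^n_{n+1}$-stages, then apply $w\Diamond(S^n_{n+1})$ to select a branch whose restriction to the chain is nontrivial. The paper makes explicit two points you leave as ``bookkeeping'': a short $d_f$-type computation showing that if a partial trivialization extended to \emph{both} successors then the difference would trivialize the injected $\bar{z}^s$ on the chain, and an intermediate claim that any given $\bar{u}$ trivializes at most one branch $\bar{x}^s$ (so that the $w\Diamond$ colouring $F$ is well defined).
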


\begin{proof}

Let $P$ be as in the statement of $R(n+1, G)$. Using Lemma \ref{increasingunion} we can write $P = \bigcup_{\alpha < \omega_{n+1}} P_\alpha $ with further assumptions on the cofinality of the latter sets and their relation to the unbounded chain. Throughout the proof, we are going to use the quotient system discussed in section 3, so for any $f \in P$ we define $Q_f = \prod_{i \in \omega} G^{f(i)} / \bigoplus_{i \in \omega} G^{f(i)}$ and let $p^{f'}_f: Q_{f'} \to Q_f$ for the natural projection whenever $f \leq^\ast f'$. Finally, let 

\[     \Q = \lB Q_f ; p^{f'}_f ; P \rB.                  \]

Let $T = \lB s \in  2^{< \omega_{n+1}} \mid \textrm{supp}(s) \sbeq S^n_{n+1}   \rB$ with the ordering being end-extension. For $s \in T$ we let $r(s) = P_{\textrm{dom}(s)}  $.

We now define, for all $s \in T$, a sequence $\xx^s $ such that:

\begin{enumerate}
    \item $ \xx^s \in \Q \restriction r(s)^{(n)}     $ and is coherent.
    
    \item If $t \geq s$ then $\xx^t$ extends $\xx^s$.
    
    \item If $ \cof ( \textrm{dom}(s)) = \aleph_n  $ and $\uu \in \Q \restriction ( C \cap r(s))^{(n-1)} $ trivializes $\xx^s \restriction ( C \cap r(s) )^{(n)} $ then there exists $\epsilon \in \lB 0, 1 \rB $ such that there is no extension of $\uu$ to a trivialization of $ \xx^{s, \epsilon } \restriction (C  \cap P_{\textrm{dom}(s) +1}  )^{(n)} $.

\end{enumerate}

If $ \textrm{dom}(s) $ is limit, and $\xx^t $  has already  been constructed for all $t$ with $ \textrm{dom}(t) <  \textrm{dom}(s) $, we let $\xx^s = \bigcup_{\alpha < \textrm{dom}(s) } \xx^{s \restriction \alpha} $. 

Now for the successor step, since $\cof (r(s)) \leq \aleph_n$ then $\xx^s \in \Q \restriction r(s)^{(n)} $ is trivial by Goblot (Theorem \ref{flasquegoblot}). Then for $\epsilon \in \lB 0,1 \rB$, let $s, \epsilon = s^\frown \lB \epsilon \rB$ and $\ww^s \restriction r(s)^{(n-1)}  $ be a  trivialization of $\xx^s$ as in the induction, and $\vv^s  \in \Q \restriction (r(s, 0) )^{(n-1)}  $ be 

\[   \vv^s_{\Bar{\gamma} } = \begin{cases}

\ww^s_{\Bar{\gamma}} \,\,\,\,\,  \textrm{if} \,\,\, \Bar{\gamma}  \in  r(s)^{(n-1)}  \\ 

0   \,\,\,\,\,\,\,   \textrm{if} \,\,\, \Bar{\gamma} \in (r(s, 0) )^{(n-1)} \setminus r(s)^{(n-1)}.
    
\end{cases}         \]
Finally, if $ \cof (\textrm{dom}(s)) < \aleph_n $ we let $ \xx^{s, 0}  = \delta^n (  \vv^s  ) $.

Otherwise, $ \aleph_n = \cof (\textrm{dom}(s) ) \leq \cof(r(s))  \leq  \aleph_n $. In that case, we pick a family $\zz^s \in \Q  \restriction r(s)^{(n-1)}  $  that witnesses the induction hypothesis, that is, it is coherent on $P_{\textrm{dom}(s)} $ and nontrivial even when restricted to the $\omega_n$-chain unbounded in it that corresponds to a witness of the cofinality of $\textrm{dom}(s)$ (or equivalently to its $\leq^\ast$-closure) and let $\yy^s \in \Q \restriction (r(s, 0) )^{(n-1)} $

\[     \yy^s_{\Bar{\gamma} }  = \begin{cases}

\zz^s_{\Bar{\gamma}} \,\,\,\,\,  \textrm{if} \,\,\, \Bar{\gamma}  \in  r(s)^{(n-1)}  \\ 

0   \,\,\,\,\,\,\,   \textrm{if} \,\,\, \Bar{\gamma} \in (r(s, 0) )^{(n-1)} \setminus r(s)^{(n-1)}.

\end{cases}                              \]
Finally, we let:  $ \xx^{s, 0} = \delta^n (  \vv^s  )    $  and $\xx^{s, 1} = \delta^n (\vv^s + \yy^s )  $.

Let us now check that conditions $(1)$ to $ (3)$ are satisfied. An increasing union of coherent sequences is coherent, and it extends the sequences of which it is a union, so conditions $1$ and $2$ are preserved at limit steps. They are also preserved at successor steps because trivial sequences are coherent and 

\[   \delta^n (\vv^s + \yy^s) \restriction r(s)^{(n)} = \delta^n (\vv^s)   \restriction r(s)^{(n)} +    \delta^n (\yy^s) \restriction    r(s)^{(n)}     =     \delta^n (\vv^s)    \restriction r(s)^{(n)}    =    \]
\[  =   \delta^n (\vv^s    \restriction r(s)^{(n)}  )  = \delta^n ( \ww^s) = \xx^s.       \]

The only thing left is to prove condition $3$. Suppose by contradiction that $\uu^0 \restriction (C \cap r(s) )^{(n-1)} = \uu = \uu^1 \restriction (   C \cap r(s)    )^{(n-1)} $, $\delta^n (\uu^0) = x^{s, 0} \restriction ( C \cap r(s,0)) )^{(n)}  $ and $\delta^n (\uu^1) = x^{s, 1} \restriction (C \cap r(s,0) ) )^{(n)}   $.

Let $f = \max \lB C \cap r(s,0) \rB$ and $\uu  \in \Q \restriction P_{\textrm{dom}(s)+1}^{(n-1)}$. We define $d_f \in \Q \restriction (C \cap r(s))^{(n-2)}$ by $( d_f (\uu))_{\Bar{h}}  = \uu_{\Bar{h}, f }  $,  Then, if $\Bar{g}$ ranges over $ (C \cap r(s))^{(n-1)} $, we have (omitting restrictions for readability):

\[ \delta^{n-1} ( d_f ( \uu^0)  )_{\Bar{g}} = \sum_{0\leq i \leq n-1}  (d_f \uu^0)_{\Bar{g}^i}  = \sum_{0\leq i \leq n-1}  \uu^0_{\Bar{g}^i, f} = (-1)^{n-1} \uu^0_{\Bar{g}} + \xx^{s, 0}_{\Bar{g}, f} = (-1)^{n-1} \uu^0_{\Bar{g}} + (-1)^n \ww_{\Bar{g}}.   \]
By the same reasoning 

\[   \delta^{n-1} ( d_f ( \uu^1)  )_{\Bar{g}}  =  (-1)^{n-1} \uu^0_{\Bar{g}} + (-1)^n \cdot ( \ww_{\Bar{g}} + \zz^s_{\Bar{g}} ),    \]
so that, since $\uu^0 $ and $\uu^1$ agree on $(C \cap r(s))^{(n-1)} $

\[ \delta^{n-1} ( d_f ( \uu^1 - \uu^0 )  ) = (-1)^n \zz^s \restriction (C \cap r(s))^{(n-1)},  \]
contradicting the fact that the restriction of $\zz^s$ to the unbounded chain is nontrivial.

It should be noticed that the case $n=1$ is computationally identical but conceptually different insofar as we need to stipulate that a family indexed by $0$-tuples is a singleton and that a function defined on $I_f$ can be regarded as a trivialization of a $1$-family below $f$ as it can be extended to the entire grid $\omega \times \omega$. 

This concludes the construction of the $\xx^s$ for $s \in T$. For $g \in 2^{\omega_{n+1}} $ with $\textrm{supp}(g) \sbeq S^n_{n+1}$ let

\[   \xx^g = \bigcup_{\lambda < \omega_{n+1} } \xx^{g \restriction \lambda}.      \]
Note that $\xx^g \in \Q^{(n)} $ is coherent (increasing union of coherent sequences). 

Now we need a club $D \sbeq \omega_{n+1} $ such that for all $\lambda \in D$ and $s \in T$ with $\textrm{dom}(s) = \lambda $, the latter codes a sequence $\uu^s \in \Q \restriction (C \cap r(s))^{(n-1)} $ and moreover: 

\begin{itemize}

    \item If $\lambda \in D$ then every $\uu^s \in \Q \restriction (C \cap r(s))^{(n-1)}$  is coded by some $s: \lambda \to 2$.

    \item If $s \leq t$ then $\uu^t$ extends $\uu^s$.
    
\end{itemize}

For this, we will use 

\[ D = \lB  \lambda < \omega_{n+1} \mid \omega_n \cdot \lambda = \lambda   \rB  ,       \]
since for any two elements $\lambda, \lambda' \in D$ there is a subset of order-type at least $\omega_n$ between them in $\omega_{n+1}$. Now let $\lambda_+ $ be the successor of $\lambda $ in the increasing enumeration of the elements of $D$. Then there are enough end-extensions of $s : \lambda \to 2$ to an element in $2^{\lambda_+}$ to code all the different extensions of $\uu^s \in \Q \restriction (C \cap P_{\lambda})^{(n-1)} $ to a sequence $\uu^t \in \Q \restriction  (C \cap P_{\lambda_+})^{(n-1)} $. Indeed, these are at most $(\aleph_0^{\aleph_0})^{\aleph_n} = 2^{\aleph_n} $. The base case of the transfinite induction follows similarly from the fact that $  \omega_n \leq \lambda_0 = \min D$ Since we are dealing with a club, the limits of the enumeration are limits in $\omega_{n+1}$ of the previous elements, and we can continue the process at limit steps just by taking unions.
Finally, for $b : \omega_{n+1} \to 2$, we let

\[ \uu^b =  \bigcup_{\lambda \in D} \uu^{b \restriction \lambda}.                                     \]
Notice that this implies that for all $\uu \in \Q \restriction C^{(n-1)}$ there is a $b $ that codes it and for every such $b $ we have 

\[  \uu^{b \restriction \lambda}  = \uu^b  \restriction (C \cap P_\lambda)^{(n-1)}               \]
on a club.

Before defining our guessing function, we need the following claim. Such an observation and the consequent definition of $F$ seem needed in the argument of \cite{velickovic2021non} as well, yet they are not present.

\begin{claim}

Let $\uu \in \Q \restriction (C  \cap P_\beta)^{(n-1)} $ for some $\beta < \omega_{n+1}$. Then there is at most one $s \in 2^\beta$ such that $\uu$ trivializes $\xx^s \restriction (C \cap P_\beta)^{(n)}$.

\end{claim}

\begin{proof}[Proof of Claim]

Assume, towards a contradiction, that there exist distinct $t$ and $t'$ such that $\uu$ trivializes both $\xx^t \restriction (C \cap P_\beta)^{(n)}$ and  $\xx^{t'} \restriction (C \cap P_\beta)^{(n)}$. Let $\alpha \in S^n_{n+1} \cap \beta $ be least such that $t(\alpha) \neq t'(\alpha) $. Then $\uu \restriction (C \cap P_\alpha)^{(n-1)}  $ trivializes $x^t \restriction (C \cap P_\alpha)^{(n)} = x^{t \restriction \alpha} \restriction (C \cap P_\alpha)^{(n)}  = x^{t' \restriction \alpha} \restriction (C \cap P_\alpha)^{(n)} = x^{t'} \restriction (C \cap P_\alpha)^{(n)}$. 

For the same reasons,  $\uu \restriction (C \cap P_{\alpha+1})^{(n-1)}  $ trivializes both $x^t \restriction (C \cap P_{\alpha+1})^{(n)} = x^{t \restriction \alpha +1} \restriction (C \cap P_{\alpha+1})^{(n)} $ and $x^{t'} \restriction (C \cap P_{\alpha+1})^{(n)} = x^{t' \restriction \alpha+1} \restriction (C \cap P_{\alpha+1})^{(n)} $. This contradicts property 3 of our construction.
\end{proof}

Then let $T \restriction D$ be the set of all nodes whose level belongs to $D$. We define $F: T \restriction D \to 2$ as follows. If $\uu^s$ trivializes $\xx^{t} \restriction (C \cap P_{\textrm{dom}(t)})^{(n)} $ for some $t$ such that $\textrm{dom}(s) = \textrm{dom}(t)$, which is unique by the previous claim, then $F(s) = \epsilon \in \lB 0, 1 \rB$ such that $\uu^s$ does not extend to a trivialization of $x^{t, \epsilon}$. Otherwise, let $F(s)$  take an arbitrary value in $ \lB 0, 1 \rB $.  

Now $w \Diamond (S^n_{n+1}) $ means precisely that for all $ F: 2^{<\omega_{n+1}} \to 2$  there exists $ g: \omega_{n+1} \to 2 $ such that for all $ b: \omega_{n+1} \to 2  $.

\[  S =  \lB \lambda \in S^n_{n+1} \mid g(\lambda) = F(b \restriction \lambda) \rB                 \]
is stationary. Then $\xx^g \restriction C^{(n)} $ is nontrivial because for any putative trivialization  $ \uu^b $  we get $\lambda  \in S \cap D$ such that $g(\lambda) = F(b \restriction \lambda)$. Now given our contradiction assumption,

\[  \uu^{b \restriction \lambda}  =  \uu^b \restriction (C \cap 
 P_\lambda)^{(n-1)}   \]  
trivializes

\[ \xx^g \restriction (C \cap P_\lambda)^{(n)}  = \xx^{g \restriction \lambda}   \restriction (C \cap P_\lambda)^{(n)}.    \] 
So, if we use the definition of $F$ above for $s= b \restriction \lambda$ and $t = g\restriction \lambda $ we find that cannot be extended to a sequence in $\Q \restriction (C \cap P_{\lambda +1} )^{(n-1)} $ that trivializes $\xx^{g \restriction (\lambda + 1)} \restriction  (C \cap P_{\lambda +1} )^{(n)} = \xx^g  \restriction   (C \cap P_{\lambda +1} )^{(n)} $, while $\uu^b \restriction (C \cap P_{\lambda +1} )^{(n-1)} $ is an extension that should do precisely that if $\uu^b$ is to be a trivialization.
\end{proof}

Finally, we can prove the following.

\begin{theorem}

\label{conclusioninduction}

Suppose $\mathfrak{d} = \aleph_n $, there exists an unbounded $\omega_n$-chain $C$ and $w \lozenge (S^k_{k+1})$ holds for $k < n$ and $2^{\aleph_0} < 2^{\aleph_1}$. Then for every nontrivial group $G$ there exists a coherent family $\xx \in \A^{(n)}_G$ for $G$ such that $\xx \restriction C^{(n)}$ is nontrivial. In particular, $\lim^n \A_G \neq 0 $.

\end{theorem}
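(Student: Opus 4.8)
The plan is to read the theorem as the payoff of the two preceding lemmas: I would run the induction on the predicate $R(\cdot, G)$ from the base case up to level $n$, and then instantiate $R(n,G)$ at the whole space ${}^\omega\omega$. For the base, since $2^{\aleph_0} < 2^{\aleph_1}$ is assumed, Lemma \ref{basecase} applies to every $\vee$-closed $P \sbeq ({}^\omega\omega, \leq^\ast)$ with $\cof(P) = \aleph_1$ that carries an unbounded $\omega_1$-chain, and yields a coherent family nontrivial on that chain; this is exactly $R(1, G)$. If $n = 1$ we are essentially done by the last computational paragraph below, so I would henceforth assume $n \geq 2$.

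Next I would iterate Lemma \ref{lemmainduction}: combining $R(1,G)$ with $w\Diamond(S^1_2)$ gives $R(2,G)$, then $R(2,G)$ with $w\Diamond(S^2_3)$ gives $R(3,G)$, and so on. After $n-1$ steps --- each legitimate because the hypothesis supplies $w\lozenge(S^k_{k+1})$ for every $k < n$, in particular for all $1 \leq k \leq n-1$ --- I reach $R(n, G)$.

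It remains to apply $R(n,G)$ to the single instance $P = {}^\omega\omega$. This is admissible: $\mathfrak{d} = \aleph_n$ gives $\cof({}^\omega\omega, \leq^\ast) = \aleph_n$, the space ${}^\omega\omega$ is trivially $\vee$-closed, and by hypothesis it carries the unbounded $\omega_n$-chain $C$. Hence $R(n,G)$ produces a coherent family over the quotient system $\Q$ whose restriction to $C$ is nontrivial. By the dimension shift of Section 3 (namely $\lim^k \B_G = 0$ for $k \geq 1$, whence $\lim^n \A_G \cong \lim^{n-1}(\B_G/\A_G)$) together with Proposition \ref{ncornontriv} applied with index $n-1$, this family corresponds to a coherent $\xx \in \A^{(n)}_G$ with $\xx \restriction C^{(n)}$ nontrivial. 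Finally, any global trivialization of $\xx$ would restrict to a trivialization on $C$, so nontriviality localized to $C$ forces nontriviality outright; thus $\xx$ represents a nonzero class of $H^n(K^\bullet(\A_G)) \cong \lim^n \A_G$, which is the conclusion.

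Since the two ingredients with real content --- the tree-and-counting construction of the base case and the weak-diamond guessing in the induction step --- are already discharged in the lemmas, the only care required here is bookkeeping, and I would spend it on three points: that the output format of Lemma \ref{basecase} matches the induction hypothesis; that the one-step index shift between the quotient system $\Q$ internal to Lemma \ref{lemmainduction} and the direct Roos complex $\A^{(n)}_G$ named in the statement is tracked correctly; and that ${}^\omega\omega$ really is an admissible instance of $R(n,G)$ under $\mathfrak{d} = \aleph_n$. The last translation --- passing from the quotient description back to $\A^{(n)}_G$ while keeping the nontriviality concentrated on $C$ --- is the step I would write out in full, since it is the only place a reader might worry that the localization is lost.
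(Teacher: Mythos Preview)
Your proposal is correct and follows essentially the same route as the paper: establish $R(1,G)$ from Lemma~\ref{basecase} via $2^{\aleph_0}<2^{\aleph_1}$, iterate Lemma~\ref{lemmainduction} using the weak diamonds to reach $R(n,G)$, and then instantiate at $P={}^\omega\omega$ using $\mathfrak{d}=\aleph_n$ and the unbounded $\omega_n$-chain. If anything, your discussion of the index shift between the quotient system $\Q$ and $\A^{(n)}_G$ is more explicit than the paper's own proof, which simply invokes Proposition~\ref{ncornontriv} without spelling that translation out.
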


\begin{proof}

The cardinal arithmetic assumption yields $R(1, G)$ by Lemma \ref{basecase}. Then by the weak diamonds and Lemma \ref{lemmainduction} one can deduce that $R(n, G)$ holds. Finally, if $\mathfrak{d} = \aleph_n $ and there exists an unbounded $\omega_n$-chain, then ${}^\omega \omega $ is among the posets $R(n, G)$ is about. According to Proposition \ref{ncornontriv}, nonvanishing derived limits are characterized by the existence of a coherent nontrivial family, and a coherent family whose restriction to some indexing subset is nontrivial is itself nontrivial.
\end{proof}

\begin{remark}
The cardinal arithmetic assumption $2^{\aleph_0} < 2^{\aleph_1}$ is not required for $G = \Z$, i.e. for the pro-group $\A$, as it is only needed in the base case for finite groups, as Remark \ref{remarkzeta} shows.

\end{remark}

\section{Cofinal rectangles and weak diamonds}

We will now show that all the set-theoretic assumptions that we needed for the proof of the main theorem are relatively consistent with $\mathrm{ZFC}$.

First, we need to introduce a ``nonlinear" finite support iteration of Hechler's forcings. The following forcing notion was first introduced in \cite{hechler1974existence}.

Let $\mathbb{Q}$ be any well-founded, countably directed poset. Let moreover $\mathbb{Q}^+ = \mathbb{Q} \cup \lB \ast \rB$ be the poset that consists of $\mathbb{Q}$ with one more element $\ast$ on top of it (i.e. greater than all other elements).

For each $a \in \mathbb{Q}^+$, we define a forcing notion $\mathbb{P}_a  $. First, let $\mathbb{Q}/a = \lB c \in \mathbb{Q} \mid c <_{\mathbb{Q}} a \rB $.
Suppose now that we have already defined $  \mathbb{P}_b$ for all $b <_\mathbb{Q} a$.

\begin{enumerate}

\item $p \in \mathbb{P}_a$ if and only if

\begin{enumerate}

    \item $p$ is a finite function with $\mathrm{dom}(p) \sbeq \mathbb{Q}/a$.

    \item For all $b \in \mathrm{dom}(p) $, $p(b) = (t, \dot{g}) $ where $t \in {}^{< \omega} \omega $, and $ \dot{g} $ is a $\mathbb{P}_b$-name for an element of ${}^\omega \omega$.

\end{enumerate}

\item  For $ p,q \in \mathbb{P}_a$, we write $p \leq q $ if and only if

\begin{enumerate}
    \item $ \mathrm{dom}(q) \sbeq \mathrm{dom}(p) $.

    \item For all $b \in \mathrm{dom}(q)$, if  $p(b) = (s,\dot{f})$ and $q(b) = (t,\dot{g}) $ then

\begin{enumerate}
    \item $t = s \restriction \mathrm{dom}(t)  $

    \item $ p \restriction (\mathbb{Q} / b) \Vdash_{\mathbb{P}_b} \mathrm{dom}(t) \leq n < \mathrm{dom}(s) \Rightarrow s(n)  >   \dot{g} (n) $.

    \item $   p \restriction (\mathbb{Q} / b) \Vdash_{\mathbb{P}_b}  \forall n \,\, \dot{f} (n) \geq \dot{g} (n)   $.

\end{enumerate}
    
\end{enumerate}

\end{enumerate}

The following is the case $\lambda = \omega$ of \cite[Theorem 1]{cummings1995cardinal}.

\begin{theorem}

\label{theoremcummings}

Suppose $\mathbb{Q}$ is any countably directed well-founded poset. Let $\mathbb{H}_{\mathbb{Q}} = \mathbb{P}_\ast $ be the poset corresponding to the maximum $\ast$ of $\mathbb{Q}^+$ as in the definition above.

\begin{enumerate}

    \item $\mathbb{H}_\mathbb{Q} $ is ccc. 

    \item $V^{\mathbb{H}_\mathbb{Q}} \models \mathbb{Q}     $ embeds cofinally into $( {}^\omega \omega, <^\ast ) $.

    \item If $V \models $ `` the bounding number of $\mathbb{Q}$ is $\beta$" then $V^{\mathbb{H}_\mathbb{Q}} \models \bfk = \beta $.

    \item If $V \models $ `` the dominating number of $\mathbb{Q}$ is $\delta$" then $V^{\mathbb{H}_\mathbb{Q}} \models \bfk = \delta $.

\end{enumerate}
    
\end{theorem}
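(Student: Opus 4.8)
The plan is to regard $\mathbb{H}_\mathbb{Q}$ as a finite-support ``matrix iteration'' of Hechler forcings laid out along $\mathbb{Q}$, and to read each of the four clauses off the behaviour of the coordinatewise generic reals. For $b \in \mathbb{Q}$ write $d_b$ for the real added at coordinate $b$, namely the union of all stems $t$ occurring in $p(b) = (t, \dot{g})$ for $p$ in the generic filter. A routine density argument shows $d_b \in {}^\omega \omega$ and, directly from the second clause of the ordering, that $d_b$ eventually dominates every promise $\dot g$ ever committed at coordinate $b$ by a condition in the filter. The map $b \mapsto d_b$ will be the cofinal embedding required by clause 2, and everything else is extracted from it.

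For clause 1 (ccc) I would run the standard $\Delta$-system argument. Given $\aleph_1$ conditions, thin out so that their finite domains form a $\Delta$-system with root $R$ and so that, for each $b \in R$, the stem at $b$ is one fixed $t_b$; this is possible because ${}^{<\omega}\omega$ is countable and $R$ is finite. Any two surviving conditions $p, q$ are then compatible: off the root their domains are disjoint, while on $R$ the stems agree and the two Hechler promises are amalgamated by passing to a pointwise maximum $\max(\dot f, \dot g)$ of the name-functions. The genuinely delicate point, and the place where well-foundedness of $\mathbb{Q}$ is used, is that these promises are $\mathbb{P}_b$-names rather than ground-model reals, so the common extension has to be built by recursion on the $<_\mathbb{Q}$-rank of $b$: the inductively constructed common extension below $b$ is what one uses to interpret $\max(\dot f, \dot g)$ and to verify clause 2(b)(iii) of the ordering at $b$. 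I expect this recursion to be the main obstacle and the technical heart of the argument.

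For clause 2 I would check that $b \mapsto d_b$ is an order-embedding with cofinal image. If $b <_\mathbb{Q} c$ then the name $\dot d_b$ for the generic at $b$ is a $\mathbb{P}_c$-name, so it is dense to commit a promise $\dot g \geq \dot d_b$ at coordinate $c$; hence $d_c \geq^\ast d_b$, giving monotonicity. Order-reflection, i.e. $b \not\leq_\mathbb{Q} c \Rightarrow d_b \not\leq^\ast d_c$, follows from a symmetric density argument forcing $d_b(n) > d_c(n)$ for infinitely many $n$, expressing the mutual Hechler-genericity of $<_\mathbb{Q}$-incomparable coordinates. Cofinality uses ccc together with countable directedness: any $x \in {}^\omega\omega \cap V^{\mathbb{H}_\mathbb{Q}}$ has a name supported, by ccc, on a countable $S \sbeq \mathbb{Q}$, and a countably directed upper bound $b$ of $S$ makes $x$ live in the corresponding intermediate extension, over which the promise mechanism forces some $d_{b'}$ ($b' >_\mathbb{Q} b$) to dominate $x$.

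Clauses 3 and 4 then follow with no further forcing input, from two soft observations yielding $\bfk = \beta$ and $\dfk = \delta$ in $V^{\mathbb{H}_\mathbb{Q}}$ (clause 4 as stated should read $\dfk = \delta$). First, a cofinal order-embedding of $\mathbb{Q}$ into $({}^\omega\omega, <^\ast)$ forces the two posets to share the same bounding and dominating numbers: an allegedly small unbounded or cofinal family on either side is transported across the embedding, using cofinality to supply witnesses and order-reflection to transfer (un)boundedness, producing one of the same size on the other side. Second, since $\mathbb{H}_\mathbb{Q}$ is ccc it preserves cardinals and has the covering property, and the statements ``$B$ is unbounded in $\mathbb{Q}$'' and ``$B$ is cofinal in $\mathbb{Q}$'' are absolute for ground-model $B$ (all quantifiers range over the fixed poset $\mathbb{Q}$); hence any small unbounded or cofinal family appearing in the extension is covered by a ground-model family of the same size, so the bounding and dominating numbers of $\mathbb{Q}$ are unchanged and remain $\beta$ and $\delta$. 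Combining the two observations inside $V^{\mathbb{H}_\mathbb{Q}}$ gives the stated values.
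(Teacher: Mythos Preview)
The paper does not prove this theorem at all; immediately before the statement it is quoted as ``the case $\lambda = \omega$ of \cite[Theorem 1]{cummings1995cardinal}'' and used thereafter as a black box. Your sketch is a correct outline of the standard argument found in that reference: the $\Delta$-system proof of ccc carried out by recursion on the well-founded rank of $\mathbb{Q}$ (which you rightly flag as the technical heart, since the promises are names rather than ground-model reals), the map $b \mapsto d_b$ as the cofinal order-embedding via density arguments, and the soft transfer of $\bfk$ and $\dfk$ using the embedding on one side and ccc covering plus absoluteness of (un)boundedness in the fixed ground-model poset $\mathbb{Q}$ on the other. You have also correctly spotted that clause 4 should read $\dfk = \delta$. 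In short, there is nothing in the present paper to compare your argument against; you have supplied what the paper deliberately outsources.
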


We now focus on the specific instance of Hechler's forcing that we are going to need for our consistency result.

\begin{proposition}

Let $k \leq n < \omega$. If $V \models \mathrm{GCH} $ then  $\mathbb{H}_{\omega_k \times \omega_n}$ has size $\aleph_n$ and $V^{\mathbb{H}_{\omega_k \times \omega_n}} \models \cfk = \aleph_n$.

\end{proposition}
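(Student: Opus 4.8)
The plan is to apply Theorem \ref{theoremcummings} with $\mathbb{Q} = \omega_k \times \omega_n$ ordered coordinatewise. First I would check its hypotheses. Since $k \geq 1$, both $\aleph_k$ and $\aleph_n$ are regular and uncountable, so every countable subset of $\omega_k \times \omega_n$ is bounded in each coordinate; this witnesses that $\mathbb{Q}$ is countably directed, and $\mathbb{Q}$ is well-founded as a product of two well-orders. Hence $\mathbb{H}_{\mathbb{Q}} = \mathbb{P}_\ast$ is ccc and in particular preserves all cardinals and cofinalities. Throughout I read ``$\mathbb{P}_b$-name for an element of ${}^\omega\omega$'' in the definition of the forcing as ``nice $\mathbb{P}_b$-name'': this is the standard convention that turns each $\mathbb{P}_a$ into a set, and it does not affect the generic extension, since by ccc-ness every real of the extension has a nice name.

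For the first assertion I would prove, by induction along the well-founded order on $\mathbb{Q}^+$, that $|\mathbb{P}_a| \leq \aleph_n$ for every $a$. Each $\mathbb{P}_b$ with $b <_{\mathbb{Q}} a$ is ccc, being a suborder of the ccc poset $\mathbb{P}_\ast$ (an antichain of $\mathbb{P}_b$ is an antichain of $\mathbb{P}_\ast$, via the restriction map $r \mapsto r \restriction \mathbb{Q}/b$), so a nice $\mathbb{P}_b$-name for a real is coded by countably many countable antichains of $\mathbb{P}_b$. Assuming inductively $|\mathbb{P}_b| \leq \aleph_n$, there are at most $\aleph_n^{\aleph_0} = \aleph_n$ such names, the equality holding by $\mathrm{GCH}$ together with $\cof(\aleph_n) > \aleph_0$. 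A condition of $\mathbb{P}_a$ is a finite partial function on $\mathbb{Q}/a$ assigning to each point of its domain a pair $(t, \dot g)$ with $t \in {}^{<\omega}\omega$ and $\dot g$ a nice name below that point; counting over finite domains then gives $|\mathbb{P}_a| \leq \aleph_n$. At the top node $\ast$ this yields $|\mathbb{H}_{\mathbb{Q}}| \leq \aleph_n$, and the reverse inequality is immediate since the $\aleph_n$ singleton conditions (one for each $b \in \mathbb{Q}$) are distinct; hence $|\mathbb{H}_{\mathbb{Q}}| = \aleph_n$.

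It remains to compute $\cfk$ in $V^{\mathbb{H}_{\mathbb{Q}}}$. For the upper bound, the same nice-name count shows the number of reals of the extension is at most $|\mathbb{H}_{\mathbb{Q}}|^{\aleph_0} = \aleph_n^{\aleph_0} = \aleph_n$, so $V^{\mathbb{H}_\mathbb{Q}} \models \cfk \leq \aleph_n$. For the lower bound I would use $\dfk \leq \cfk$ together with Theorem \ref{theoremcummings}(4): the dominating number of $\mathbb{Q}$ equals $\cof(\omega_k \times \omega_n) = \aleph_n$. Indeed, if $A \sbeq \omega_k$ and $B \sbeq \omega_n$ are cofinal of least sizes $\aleph_k$ and $\aleph_n$, then $A \times B$ is cofinal in $\mathbb{Q}$ and has size $\aleph_k \cdot \aleph_n = \aleph_n$; conversely the projection to the second coordinate of any cofinal set is cofinal in $\omega_n$, forcing size at least $\cof(\omega_n) = \aleph_n$. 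Thus $V^{\mathbb{H}_\mathbb{Q}} \models \dfk = \aleph_n$, so $\cfk \geq \aleph_n$, and combining the two bounds gives $\cfk = \aleph_n$.

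I expect the main obstacle to be the bookkeeping of the size recursion, and specifically making the name-counting rigorous: one must verify that restricting to nice names is legitimate, that each $\mathbb{P}_b$ is genuinely ccc so that its antichains (hence nice names) are controllable, and that the arithmetic $\aleph_n^{\aleph_0} = \aleph_n$ is available at every level, which is exactly where $\mathrm{GCH}$ and $n \geq 1$ are used. Once the size bound is established, the cofinality computation for the product poset and the two $\cfk$ bounds are routine.
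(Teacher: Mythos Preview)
Your proof is correct and follows essentially the same strategy as the paper: an induction along the well-founded order on $\mathbb{Q}^+$ bounding $|\mathbb{P}_a|$ via nice names (using ccc-ness and $\mathrm{GCH}$), together with an elementary lower bound on the size. The only organizational difference is that the paper carries the value of $\cfk$ through a \emph{mutual} induction alongside the size computation, whereas you first finish the size induction and then obtain $\cfk \geq \aleph_n$ at the top level via $\dfk \leq \cfk$ and Theorem~\ref{theoremcummings}(4); this is a mild simplification, since the $\dfk$ computation is already available from the cited theorem and spares you from tracking the continuum at every intermediate stage.
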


\begin{proof}

Let $(\omega_k , \omega_n) = \ast $ be the maximum element of the associated poset. One proves by mutual transfinite induction on the well-founded order relation that for all $(\omega_1, \omega_1) \leq (\alpha, \beta) \leq (\omega_k \times  \omega_n)$ we have $ \vert \mathbb{P}_{(\alpha, \beta)} \vert = \max \lB \vert \alpha \vert, \vert \beta \vert \rB  $ and $V^{\mathbb{P}_{(\alpha, \beta)}} \models \cfk = \max \lB \vert \alpha \vert, \vert \beta \vert \rB $. One implication follows from the fact that the nonlinear iteration is finite support and the other follows by the technique of nice names \cite[Lemma IV 3.11]{kunen2013set}.
\end{proof}

This allows us to apply the following lemma.

\begin{lemma}[see \cite{velickovic2021non}]

\label{lemmavelickovic}

Let $\kappa$ be an uncountable regular cardinal, and let $\lambda$ and $\mu$ be cardinals such that $\lambda^{< \kappa} < \mu$. Suppose $\mathbb{P}$ is a $\kappa$-cc poset of size $\lambda$. Let $\mathbb{Q} = \mathrm{Fn}(\mu \times \kappa, 2, \kappa)$. Then, for every stationary subset $S$ of $\kappa$ in $V$, $w \Diamond(S)$ holds in the generic extension by $\mathbb{P} \times \mathbb{Q}$.

\end{lemma}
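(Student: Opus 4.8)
The plan is to route the argument through the Devlin--Shelah principle that $2^{<\kappa}<2^\kappa$ alone implies $w\Diamond(S)$ for every stationary $S\sbeq\kappa$ (this is the content for which \cite{devlin1978weak} is cited), and then to check in $V^{\mathbb{P}\times\mathbb{Q}}$ both that this cardinal inequality holds and that $S$ remains stationary. It is worth flagging at the outset why the seemingly natural shortcut fails: one would like to declare one of the Cohen subsets $c\colon\kappa\to 2$ added by $\mathbb{Q}$ to be the guessing function and argue by genericity, but for a name $\dot b$ for an element of ${}^\kappa 2$ the single bit $\dot b(\beta)$ may be decided only by conditions whose (bounded, by regularity of $\kappa$) domains are unbounded across the generic, so there is no fusion producing a closure point $\alpha\in S$ at which $c(\alpha)$ may still be set freely to equal $F(\dot b\restriction\alpha)$. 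This is precisely why the counting route through $2^{<\kappa}<2^\kappa$ is the correct one.

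For preservation I would first record that $\mathbb{Q}=\mathrm{Fn}(\mu\times\kappa,2,\kappa)$ is $<\kappa$-closed: a descending sequence of fewer than $\kappa$ conditions has its union as a lower bound, and that union again has size $<\kappa$ because $\kappa$ is regular; as this computation is absolute, $\mathbb{Q}$ stays $<\kappa$-closed over $V^{\mathbb{P}}$ as well. A $<\kappa$-closed forcing adds no new bounded subsets of $\kappa$ and preserves stationary subsets of $\kappa$, and it preserves the $\kappa$-cc of $\mathbb{P}$: a putative new antichain of size $\kappa$ can be thinned along a descending $<\kappa$-closed sequence of conditions to a ground-model antichain of size $\kappa$, using that incompatibility of fixed conditions is absolute. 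Since the $\kappa$-cc poset $\mathbb{P}$ in turn makes every club of $\kappa$ refine a ground-model club, running the two factors in either order shows that $\kappa$ stays regular and $S$ stays stationary in $V^{\mathbb{P}\times\mathbb{Q}}$.

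For the arithmetic the two points are as follows. First, since $\mathbb{Q}$ adds no bounded subsets of $\kappa$, for each $\theta<\kappa$ every subset of $\theta$ in $V^{\mathbb{P}\times\mathbb{Q}}$ already lies in $V^{\mathbb{P}}$; the number of nice $\mathbb{P}$-names for such a set is at most $(\lambda^{<\kappa})^\theta\le\lambda^{<\kappa}$ computed in $V$ (antichains have size $<\kappa$ by the $\kappa$-cc, and there are at most $\lambda^{<\kappa}$ of them), so $2^{<\kappa}\le\lambda^{<\kappa}<\mu$ in the extension. Second, $\mathbb{Q}$ adds $\mu$ pairwise distinct subsets of $\kappa$ and, being $(2^{<\kappa})^+$-cc with $\mu\ge(2^{<\kappa})^+$, preserves the cardinal $\mu$, whence $2^\kappa\ge\mu$. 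Combining the two gives $2^{<\kappa}<2^\kappa$ in $V^{\mathbb{P}\times\mathbb{Q}}$.

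With $S$ still stationary and $2^{<\kappa}<2^\kappa$ both secured, the parametrised Devlin--Shelah theorem delivers $w\Diamond(S)$ and finishes the proof. The main obstacle is not a single hard step but the careful bookkeeping in the two intermediate claims --- in particular verifying that the nice-name count stays below $\mu$ across the product and that $\mathbb{P}$ retains its chain condition over $V^{\mathbb{Q}}$; the one genuinely instructive point is the breakdown of the naive genericity argument described above, which is exactly what compels us to pass through the cardinal inequality rather than witness the guessing function directly.
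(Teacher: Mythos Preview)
The paper does not supply its own proof of this lemma; it merely records the statement and cites \cite{velickovic2021non}. Your argument is correct and is essentially the standard route taken in that reference: factor the product using Easton's lemma (the $<\kappa$-closed $\mathbb{Q}$ adds no bounded subsets of $\kappa$ and preserves the $\kappa$-cc of $\mathbb{P}$, while the $\kappa$-cc factor preserves stationarity of $S$), compute $2^{<\kappa}\le\lambda^{<\kappa}<\mu\le 2^\kappa$ in the extension via the nice-name count, and then invoke the generalized Devlin--Shelah theorem that $2^{<\kappa}<2^\kappa$ implies $w\Diamond(S)$ for every stationary $S\subseteq\kappa$. There is nothing further to compare.
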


This in turn allows us to prove the existence of a forcing extension that satisfies all of our requirements.

\begin{theorem}

\label{concofrect}

Let $k, n \leq \omega$. Then it is relatively consistent with $\mathrm{ZFC}$ that there is a cofinal set in ${}^\omega \omega $ isomorphic to $\omega_k \times \omega_n$ with respect to $<^\ast$, that $2^{\aleph_0} < 2^{\aleph_1}$, and $w \Diamond (S^i_{i+1}) $ for all $1 \leq i < n$.
    
\end{theorem}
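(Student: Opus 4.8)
The plan is to force over a ground model $V \models \mathrm{GCH}$ with a single product
\[ \mathbb{R} \;=\; \mathbb{H}_{\omega_k \times \omega_n} \times \prod_{0 \le i < n} \mathbb{Q}_i, \qquad \mathbb{Q}_i = \mathrm{Fn}(\mu_i \times \omega_{i+1}, 2, \omega_{i+1}), \]
where the cardinals $\mu_i$ are chosen by recursion so that the hypotheses of Lemma \ref{lemmavelickovic} are met at each level. A concrete workable choice under $\mathrm{GCH}$ is $\mu_i = \aleph_{n+1+i}$; the point is only that $\mu_i$ is a regular cardinal exceeding $\lambda_i^{<\omega_{i+1}}$, where $\lambda_i := |\mathbb{H}_{\omega_k\times\omega_n} \times \prod_{j<i}\mathbb{Q}_j|$ depends only on $\mu_0,\dots,\mu_{i-1}$ and $\aleph_n$, so the recursion is well-founded. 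Recall from the preceding Proposition that $|\mathbb{H}_{\omega_k\times\omega_n}| = \aleph_n$ and that this forcing makes $\cfk = \aleph_n$, while each $\mathbb{Q}_i$ is $<\omega_{i+1}$-closed and, by a $\Delta$-system argument under $\mathrm{GCH}$, is $\aleph_{i+2}$-cc.

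For the cofinal rectangle and for $2^{\aleph_0} < 2^{\aleph_1}$, I would force in the order $\prod_{0\le i<n}\mathbb{Q}_i$ first, then $\mathbb{H}_{\omega_k\times\omega_n}$. The product $\prod_i \mathbb{Q}_i$ is $\sigma$-closed (each factor is $<\omega_{i+1}$-closed, hence $<\omega_1$-closed), so it adds no reals and preserves cardinals; in particular $({}^\omega\omega,<^\ast)$ is the same as in $V$ and $2^{\aleph_0}$ is unchanged. By Easton's Lemma \cite{kunen2013set} this $\sigma$-closed forcing preserves the ccc of $\mathbb{H}_{\omega_k\times\omega_n}$, so over $V^{\prod_i\mathbb{Q}_i}$ Theorem \ref{theoremcummings} still applies and yields a cofinal copy of $\omega_k\times\omega_n$ in $({}^\omega\omega,<^\ast)$ together with $2^{\aleph_0}=\aleph_n$ in the final model $V^{\mathbb{R}}$. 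Meanwhile $\mathbb{Q}_0 = \mathrm{Fn}(\mu_0\times\omega_1,2,\omega_1)$ adjoins $\mu_0 = \aleph_{n+1}$ subsets of $\omega_1$, whence $2^{\aleph_1} \ge \aleph_{n+1} > \aleph_n = 2^{\aleph_0}$.

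For the weak diamonds, fix $1 \le i < n$ and write $\mathbb{R} = \mathbb{K}_i \times \mathbb{L}_i \times \mathbb{Q}_i$, where $\mathbb{K}_i = \prod_{i<j<n}\mathbb{Q}_j$ and $\mathbb{L}_i = \mathbb{H}_{\omega_k\times\omega_n}\times\prod_{j<i}\mathbb{Q}_j$. I would force $\mathbb{K}_i$ first: it is $<\omega_{i+1}$-closed, so it preserves the stationarity of $S^i_{i+1}$ and leaves $\omega_{i+1}$, the poset $\mathbb{Q}_i$, and the value $\lambda_i^{<\omega_{i+1}}$ unchanged. The goal is then to apply Lemma \ref{lemmavelickovic} over $V^{\mathbb{K}_i}$ with $\kappa = \omega_{i+1}$, with $\mathbb{P} = \mathbb{L}_i$ and Cohen part $\mathbb{Q}_i$, and $\mu = \mu_i$: since $\lambda_i^{<\omega_{i+1}} < \mu_i$, this produces $w\Diamond(S^i_{i+1})$ in $V^{\mathbb{K}_i}[\mathbb{L}_i\times\mathbb{Q}_i] = V^{\mathbb{R}}$. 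As these are all decompositions of one and the same $\mathbb{R}$, the conclusions $w\Diamond(S^i_{i+1})$ for $1\le i<n$, the cofinal rectangle, and $2^{\aleph_0}<2^{\aleph_1}$ hold simultaneously in the single model $V^{\mathbb{R}}$.

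The main obstacle is verifying the one remaining hypothesis of the Lemma, namely that $\mathbb{L}_i$ is $\omega_{i+1}$-cc in $V^{\mathbb{K}_i}$, since $\mathbb{L}_i$ is a product of forcings with different chain conditions and a product of $\omega_{i+1}$-cc posets need not be $\omega_{i+1}$-cc. I would handle this by forcing the factors of $\mathbb{L}_i$ in decreasing order of closure, namely $\mathbb{Q}_{i-1}, \mathbb{Q}_{i-2}, \dots, \mathbb{Q}_0, \mathbb{H}_{\omega_k\times\omega_n}$, invoking Easton's Lemma at each step: each $\mathbb{Q}_j$ with $j<i$ is $\aleph_{j+2}$-cc, that is $\omega_{j+2}$-cc with $\omega_{j+2}\le\omega_{i+1}$, and the already-forced product $\prod_{j<j'<i}\mathbb{Q}_{j'}$ is $<\omega_{j+2}$-closed and changes $2^{\aleph_j}$ not at all, so it preserves the $\omega_{j+2}$-cc of $\mathbb{Q}_j$; finally $\prod_{j<i}\mathbb{Q}_j$ is $\sigma$-closed and preserves the ccc of $\mathbb{H}_{\omega_k\times\omega_n}$. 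A finite iteration of $\omega_{i+1}$-cc forcings is $\omega_{i+1}$-cc, so $\mathbb{L}_i$ is $\omega_{i+1}$-cc in $V$, and one more application of Easton's Lemma across the $<\omega_{i+1}$-closed $\mathbb{K}_i$ transfers this to $V^{\mathbb{K}_i}$. The same bookkeeping shows that $\mathbb{R}$ preserves all cardinals, which validates the cardinal computations used above.
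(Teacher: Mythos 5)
Your proposal is correct and is essentially the paper's own argument: the product $\mathbb{H}_{\omega_k\times\omega_n}\times\prod_{0\le i<n}\mathrm{Fn}(\aleph_{n+1+i}\times\omega_{i+1},2,\omega_{i+1})$ is, after reindexing, exactly the forcing $\mathbb{H}_{\omega_k\times\omega_n}\times\mathbb{C}_1\times\cdots\times\mathbb{C}_n$ used in the paper, and the per-$i$ decomposition into a highly closed tail, a small-chain-condition part, and the Cohen factor feeding Lemma \ref{lemmavelickovic} is the same. The only substantive difference is that you explicitly verify, via Easton's Lemma applied factor by factor, that $\mathbb{L}_i$ is $\omega_{i+1}$-cc in $V^{\mathbb{K}_i}$ — a hypothesis of Lemma \ref{lemmavelickovic} that the paper uses without comment — so your write-up is, if anything, slightly more complete.
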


\begin{proof}

We start with a ground model $ V \models \textrm{GCH}$. For $1 \leq i \leq n$, let $\mathbb{C}_i = Fn(\omega_{n+i} \times \omega_i, 2, \omega_i)$
and $ \mathbb{C} = \mathbb{H}_{\omega_k \times \omega_n} \times \mathbb{C}_1 \times ... \times \mathbb{C}_n   $ be the product forcing notion. Let $G = G_0 \times ... \times G_n$ be a generic filter over $\mathbb{C}$. It is well known \cite[Lemma V.2.6]{kunen2013set} that $\mathbb{C}_1 \times ... \times \mathbb{C}_n$ preserves cofinalities; moreover, it is $\sigma$-closed and so does not add any reals. Hence, by the product lemma \cite[Theorem V.1.2]{kunen2013set} $V[G] = V[G_1 \times ... \times  G_n][G_0] $ and so we have indeed a cofinal subset of shape $\omega_k \times \omega_n$ in the extension, as well as preservation of cofinalities and $2^\aleph_i = 2^{\aleph_{n+i}}$ for $0 \leq i \leq n$ (again, by nice names) as $\mathbb{H}_{\omega_k \times \omega_n} $ is absolute between $V$ and $V[G_1 \times ... \times G_n]$. 

As for $w \Diamond (S^i_{i+1}) $, note that for all $i < n$ the notion $\mathbb{C}_{i+2} \times \mathbb{C}_n $ is $\omega_{i+2}$-closed. Hence if we let $W = V[G_{i+2} \times ...  \times G_n]  $ we have $2^{\aleph_k} = \aleph_{k+1}$ in $W$. Now we apply \ref{lemmavelickovic} over $W$ with $\mathbb{P} = \mathbb{H}_{\omega_k \times \omega_n} × \mathbb{C}_1 \times ... \times \mathbb{C}_i$, $\mathbb{Q} = \mathbb{C}_{i+1}$, $\kappa = \omega_{i+1}$,
$\lambda = \omega_{n+i}$, $\mu =\omega_{n+i+1}$ , and $S = S^i_{i+1}$, we get the conclusion again by the product lemma. 
\end{proof}

\begin{proof}[Proof of Main Theorem]

Theorem \ref{concofrect} together with \ref{conclusioninduction} easily implies the consistency of $\lim^n \A_G \neq 0$ with the relevant values of $\mathfrak{b} $ and $\mathfrak{d}$.  The case $G = \Z $ is precisely the Main Theorem.

\end{proof}

\section{Appendix: the Roos complex}

As mentioned in the introduction, in this appendix we give a categorical proof of Roos' method of computation for derived limits. This consists of the argument presented in \cite[pp.345-348]{neeman2014triangulated}, with the addition of a proof for Claim \ref{claimneeman} and an explanation of how some suitable functors $F^i_a$ act on morphisms. These are contravariant, while Neeman claims that the corresponding functors in his proof are covariant. 

We start by defining one of the so-called Grothendieck's AB conditions.

\begin{definition}

Let $\lambda$ be an infinite cardinal. An abelian category $\cA \, $ is said to be  $AB4^*(\lambda) $ if it has all products of fewer than $\lambda$-many object and the products of less than $\lambda$-many exact sequences in $\cA \, $ is exact.   

\end{definition}

To make sense of the definition, one can define a category $\textrm{ExSeq(}\cA\textrm{)}$ of exact sequences in $\cA$ and compute the limit of a functor from a discrete category to it.

\begin{definition}[Simplicial and cosimplicial objects]

Let $\Delta$ be the category where objects are finite ordinals and morphisms are weakly increasing maps between them, and put $[n] \defeq \lB 0, ... , n \rB$. Then a simplicial object in the category $\mathcal{C}$ is a functor $X: \Delta^{op} \to \mathcal{C}$. \par
The $i$-th face is $d_i = X(\delta^i) : X([n]) \to X([n-1])     $ where $\delta^i : [n-1] \to [n] $ is the unique injection in the category $\Delta$ whose range omits $i \in [n]$.  \par

The $i$-th degeneracy is $s_i = X(\sigma^i) : X([n]) \to X([n-1])     $ where $\sigma^i : [n+1] \to [n] $ is the unique surjection in the category $\Delta$ such that $i \in [n]$ has two elements in its preimage. \par

Similarly, a cosimplicial object in the category $\mathcal{C}$ is a functor $Y: \Delta \to \mathcal{C}$. \par
Cofaces and codegeneracies are the morphisms $Y(\delta^i) $ and $Y(\sigma^i)$ respectively, for $i \in [n]$.

\end{definition}

\begin{definition}

The nerve of the category $\J$ is defined as the simplicial object $\mathscr{N}_{\bullet}(\J)  $ where $\mathscr{N}_k (\J) $ (the $k$-simplices) is the set of composable sequences of morphisms of length $k$

\[      i_0 \to i_1 \to i_2 \to ... \to  i_k  .                             \]

The $j$-th face map is that which composes morphisms at $i_j$ and the $j$-th degeneracy map is the one that adds the identity at $i_j$. \par
    
\end{definition}

\begin{definition}
An additive functor $F: \cA \to \cB $ between abelian categories is called \textit{ effaceable } if for any object $A$ in $\cA \, $  we can find a monomorphism $u: A   \to M$  such that  $F(u) = 0$.  

\end{definition}

\begin{definition}[$\delta$-functor]

Given two abelian categories \cA \, and \cB, a (cohomological covariant) \textit{$\delta$-functor } is a sequence of additive functors $ S^n : \cA \to \cB $ for $n \geq 0 $ and of connecting morphisms $\delta^n_E : S^n(X'') \to S^{n+1}(X')$ for each short exact sequence  in \cA

\[ E \defeq ( \,\, 0 \rightarrow X' \xrightarrow[]{f'} X \xrightarrow[]{f} X''  \rightarrow 0       \,\,   )   \]
making the following sequence exact

\[  0 \rightarrow S^0 (X') \xrightarrow[]{S^0(f')}  S^0 (X) \xrightarrow[]{S^0(f)}    S^0 (X'')  \xrightarrow[]{\delta^0_E} S^1(X')  \rightarrow  ...   \]

\[  ... \rightarrow    S^n (X') \xrightarrow[]{S^n(f')}  S^n (X) \xrightarrow[]{S^n(f)}    S^n (X'')  \xrightarrow[]{\delta^n_E} S^{n+1}(X')           \]
and such that if $\textbf{h} \defeq (h', h, h'') : E \to F  $ is a morphism of exact sequences as below

\begin{center}
    \begin{tikzcd}
    
    E \, = \, ( \,\, 0 \arrow[r] & X'\arrow[d, "{h'}"] \arrow[r] &  X \arrow[d, "{h}"]  \arrow[r] & X'' \arrow[d, "{h''}"] \arrow[r] &  0 \,\, )  \\
    
    F \, = \, ( \,\, 0 \arrow[r] & Y' \arrow[r] &  Y \arrow[r] & Y'' \arrow[r] &  0 \,\, )  
    
    \end{tikzcd}
    
\end{center}
then, for every $n \geq 0$, the following diagram commutes:

\begin{center}
    
  \begin{tikzcd}
    S^n(X'') \arrow[d,"S^n(h'')"] \arrow[r,"\delta^n_E"] &
    S^{n+1}(X') \arrow[d,"S^{n+1}(h')"] \\
    S^n(Y'') \arrow[r,"\delta^n_F"] & S^{n+1}(Y').
  \end{tikzcd}  

\end{center}

If $\textbf{S} = (S^n, \theta^n_E)$ and $\textbf{T} = (T^n, \omega^n_E)$ are two $\delta$-functors with homomorphisms $\theta^n_E: S^n(X'') \to S^{n+1} (X')$ and $\omega^n_E: T^n(X'') \to T^{n+1} (X')$, a \textit{morphism of $\delta$-functors} is a sequence of natural transformations $\phi^n : S^n \to T^n $ such that, for every exact sequence $E$ as above, the following diagram commutes:

\begin{center}

    \begin{tikzcd}
    S^n(X'')  \arrow[r, "{\theta^n_E}"]   \arrow[d, "{ \phi^n_{X''}}"]             &      S^{n+1}(X')  \arrow[d, "{ \phi^{n+1}_{X'} }"]               \\
     T^n(X'')     \arrow[r, "{\omega^n_E}"]                     & T^{n+1}(X')
    \end{tikzcd}  

\end{center}

An \textit{isomorphism} of $\delta$-functors is a morphism such that $\phi^n$ is a natural isomorphism for all $n$.

\end{definition}

\begin{definition}[Universal $\delta$-functor]

A $\delta$-functor $\textbf{S} = (S^n, \theta^n_E)$ is said to be \textit{universal} if for any other connected sequence of functors
$\textbf{T}= (T^n, \omega^n_E)  $ and any natural transformation $ \phi : S^0 \to T^0 $ there exists a
unique morphism $(\phi^n)_{n\geq 0}$ from $\textbf{S}$ to $\textbf{T}$ such that $\phi^0 = \phi$.

\end{definition}

\begin{remark}

\label{isouniversal}

If two $\delta$-functors $\textbf{S}$ and $\textbf{T}$ are both universal and $\phi :S^0 \to T^0 $ is a natural isomorphism, then there exists a unique (iso)morphism $(\phi^n)$ from $\textbf{S}$ to   $\textbf{T}$ such that $\phi^0 = \phi$. This is because the compositions of the unique morphisms of $\delta$-functors arising from $\phi $ and $\phi^{-1}$ must be the identity on $\textbf{S}$ and $\textbf{T}$ as they are the identity on level zero.

\end{remark}

\begin{fact}

\label{unilim}

$(\lim^n, \theta^n_E)$  form a universal $\delta$-functor, for some connecting morphisms $\theta^n_E$.

\end{fact}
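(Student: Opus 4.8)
The plan is to identify $\lim^n$ as the right derived functors of the left-exact functor $\lim^0 = \lim$ and then to read off universality from Grothendieck's criterion, namely that a cohomological $\delta$-functor whose positive-degree components are all \emph{effaceable} is universal. Throughout, the ambient abelian category is that of inverse systems of abelian groups indexed by $\Lam$, i.e. the functor category $\mathrm{Ab}^{\Lam^{op}}$. Being a functor category from a small category into a Grothendieck category, this is itself a Grothendieck category, and in particular it has enough injectives; this is the only structural input I need.

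First I would produce the $\delta$-functor structure. Since $\lim$ is left exact and the category has enough injectives, the derived functors $\lim^n$ are computed via injective resolutions. Given a short exact sequence $E = (0 \rar X' \rar X \rar X'' \rar 0)$, the horseshoe lemma lifts $E$ to a short exact sequence of injective resolutions; applying $\lim$ termwise yields a short exact sequence of complexes, whose long exact cohomology sequence supplies the connecting morphisms $\theta^n_E \colon \lim^n X'' \rar \lim^{n+1} X'$ and the exactness required in the definition of a $\delta$-functor. The naturality squares demanded by the definition are precisely the functoriality in $E$ of this long exact sequence, which the horseshoe construction delivers. Hence $(\lim^n, \theta^n_E)$ is a cohomological $\delta$-functor with $\lim^0 \cong \lim$.

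Next I would verify effaceability of $\lim^n$ for each $n \geq 1$. Given any inverse system $A$, enough injectives provides a monomorphism $u \colon A \rar I$ with $I$ injective. Because higher right derived functors vanish on injective objects, $\lim^n I = 0$ for $n \geq 1$, so $\lim^n(u) \colon \lim^n A \rar \lim^n I = 0$ is the zero map. Thus the defining condition of effaceability holds with $M = I$, for every object $A$ and every $n \geq 1$. Applying Grothendieck's theorem that an effaceable cohomological $\delta$-functor is universal then gives that $(\lim^n, \theta^n_E)$ is universal, which is the assertion.

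The main obstacle I anticipate is not the homological input, which is classical, but ensuring that \emph{this} presentation of $\lim^n$ (derived functors via injectives) is compatible with the comparison against $H^n(K^\bullet)$ carried out in the rest of the appendix. In practice this is harmless: once both $(\lim^n)$ and $(H^n(K^\bullet))$ are shown to be universal $\delta$-functors agreeing in degree $0$, Remark \ref{isouniversal} forces a unique isomorphism extending the degree-$0$ identification, so the particular choice of connecting maps $\theta^n_E$ above is immaterial to the final conclusion.
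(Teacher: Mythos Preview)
Your argument is correct and is exactly the standard route: the functor category $\mathrm{Ab}^{\Lam^{op}}$ has enough injectives, the horseshoe lemma gives the $\delta$-functor structure on the right derived functors of $\lim$, and effaceability of $\lim^n$ for $n\geq 1$ is witnessed by embedding any object into an injective, on which higher derived functors vanish; Lemma~\ref{lemmaeffeceable} then yields universality.

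The paper, however, does not prove Fact~\ref{unilim} at all. It is recorded as a known background fact and invoked as a black box: the appendix shows that the Roos-complex cohomology $T^n$ is itself a universal $\delta$-functor (by building the explicit effacements $F\hookrightarrow \prod_i F^i_{F(i)}$ and applying Lemma~\ref{lemmaeffeceable}), and then Remark~\ref{isouniversal} identifies $T^n$ with $\lim^n$. So you are supplying a proof where the paper simply cites.

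One minor point of scope: the appendix is phrased for a general abelian category $\cA$ satisfying $AB4^*(\lambda)$, and $\cA^{\J^{op}}$ need not have enough injectives in that generality. Your argument covers $\cA=\mathrm{Ab}$, which is all the body of the paper requires; for the general statement one would fall back on the total-derived-functor framework the paper references, or simply accept the Fact as stated.
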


The following lemma was first proved in Grothendieck's celebrated \textit{Tohoku paper} \cite{grothendieck1957quelques}. 

\begin{lemma}

\label{lemmaeffeceable}

Every $\delta$-functor $T= (T^i)_{i \geq 0} $ with $T^i$ effaceable for all $i \geq 1$ is universal. 

\end{lemma}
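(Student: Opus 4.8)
The plan is to prove that a $\delta$-functor $T = (T^i)_{i \geq 0}$ with every $T^i$ ($i \geq 1$) effaceable is universal, following the classical argument from Grothendieck's Tohoku paper. By the definition of universality, I must show that given any $\delta$-functor $S = (S^n, \theta^n_E)$ together with a natural transformation $\phi^0 : T^0 \to S^0$, there exists a unique sequence of natural transformations $\phi^n : T^n \to S^n$ commuting with the connecting maps. I would proceed by induction on $n$, constructing $\phi^{n+1}$ from $\phi^n$ and proving it is uniquely determined at each stage.

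The key device is effaceability. For each object $A$ of $\cA$, choose a monomorphism $u : A \to M$ with $T^{i}(u) = 0$ for the relevant $i \geq 1$. Completing $u$ to a short exact sequence
\[
E \defeq (\, 0 \to A \xrightarrow{u} M \to C \to 0 \,),
\]
the long exact sequence of $T$ gives, for $i \geq 1$, an exact piece $T^i(M) \xrightarrow{T^i(u)} T^i(A) \xrightarrow{\theta^i_E} T^{i+1}(C)$, but actually the more useful portion is one step earlier: since $T^i(M) \to T^i(A)$ factors through the zero map $T^i(u)$, the connecting homomorphism $\theta^{i-1}_E : T^{i-1}(C) \to T^i(A)$ is an \emph{epimorphism} for $i \geq 1$ (using that $T^i(u) = 0$ forces the preceding map in the long exact sequence to be surjective onto $\ker T^i(u) = T^i(A)$). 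This surjectivity is what lets me define $\phi^i_A$ on all of $T^i(A)$ by lifting through $\theta^{i-1}_E$ and applying the already-constructed $\phi^{i-1}_C$ together with the analogous connecting map $\omega^{i-1}_E$ of $S$. Concretely, I would set $\phi^i_A$ to be the unique map making the square with $\theta^{i-1}_E$ and $\omega^{i-1}_E$ commute, using the epimorphism to get existence and the universal property of the long exact sequence to control well-definedness.

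The main obstacle, and the heart of the proof, is verifying that $\phi^i_A$ is \emph{well-defined} — independent of the chosen efficacious monomorphism $u : A \to M$ — and that the resulting maps are genuinely \emph{natural} in $A$ and \emph{commute with the $\delta$-structure}. The standard way to handle independence is to observe that any two choices of embeddings can be compared through a third (their pushout or a common dominating embedding), and then to use the naturality square for morphisms of exact sequences that is built into the definition of a $\delta$-functor. I would first establish naturality with respect to morphisms of the form $E \to E'$ covering a fixed $A$, deduce well-definedness from this, and only then check naturality in $A$ for arbitrary morphisms $A \to A'$, again by embedding both into efficacious objects compatibly. Uniqueness follows in parallel: because $\theta^{i-1}_E$ is epic, any morphism of $\delta$-functors extending $\phi^0$ is forced to agree with the $\phi^i$ just constructed, so the inductive hypothesis propagates.

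Finally, I would note that the base case $n = 0$ is given, and that at each inductive step the commutativity of the connecting-map squares is exactly what the construction of $\phi^{i}$ encodes, so assembling the $\phi^n$ into a morphism of $\delta$-functors is immediate once well-definedness and naturality are in hand. The whole argument is purely diagrammatic and uses effaceability only to secure the epimorphism $\theta^{i-1}_E$; no further hypotheses on $\cA$ or $\cB$ beyond their being abelian are needed. I expect the bookkeeping of comparing two embeddings via a common refinement to be the most delicate point to write carefully.
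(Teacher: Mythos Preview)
Your proposal is correct and follows the classical Grothendieck argument. Note, however, that the paper does not actually supply a proof of this lemma: it merely states it and cites Grothendieck's \emph{Tohoku} paper \cite{grothendieck1957quelques}. So there is no ``paper's own proof'' to compare against; you have filled in precisely the argument the paper outsources.

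One small wording slip: you write ``since $T^i(M) \to T^i(A)$ factors through the zero map $T^i(u)$'', but in the long exact sequence the map goes the other way, $T^i(u) : T^i(A) \to T^i(M)$. Your parenthetical immediately afterward has it right --- $T^i(u) = 0$ forces $\ker T^i(u) = T^i(A)$, whence the preceding connecting map $\theta^{i-1}_E$ is epic --- so the mathematics is fine, but you should clean up that sentence. Otherwise the outline (inductive construction via the epic connecting map, well-definedness via comparison of two effacing embeddings through a common refinement, naturality and uniqueness from the same epimorphism) is exactly the standard proof and is sound.
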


\begin{theorem}

Let $\lambda$ be an infinite cardinal. Let $\cA $ be an abelian category satisfying $AB4^* (\lambda) $. Suppose moreover that $\J$ is a small category with set of all morphisms of cardinality $< \lambda$. Then the functor 

\[   \lim : \cA^{\J^{op}} \to \cA    \]
has right derived functors $\lim^n$ and these can be computed as the cohomology of a complex.

\end{theorem}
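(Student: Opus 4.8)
The plan is to construct, directly from the nerve-indexed generalization of the Roos complex, a cohomological $\delta$-functor $(H^n,\theta^n_E)$ with $H^0\cong\lim$, to prove that it is universal, and then to identify it with $(\lim^n,\theta^n_E)$ using Fact \ref{unilim} and Remark \ref{isouniversal}. Concretely, for a functor $\G\in\cA^{\J^{op}}$ I would set $K^n(\G)=\prod_{\sigma\in\mathscr{N}_n(\J)}\G(i_0)$, where $\sigma=(i_0\to\cdots\to i_n)$ ranges over the $n$-simplices of the nerve and $i_0$ is its source, with coboundary maps built from the face maps exactly as in Definition \ref{defrooscomplex}; then $H^n$ is the functor sending $\G$ to the $n$-th cohomology of $K^\bullet(\G)$. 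A direct inspection of $K^0(\G)\xrightarrow{\delta^1}K^1(\G)$ shows that its kernel computes the limit cone, so $H^0\cong\lim=\lim^0$.

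First I would verify that the $(H^n)$ carry the structure of a $\delta$-functor. Given a short exact sequence $E=(\,0\to\G'\to\G\to\G''\to 0\,)$ in $\cA^{\J^{op}}$ — that is, one exact at every object of $\J$ — applying $K^n$ yields in each degree a product, indexed by $\mathscr{N}_n(\J)$, of the evaluated short exact sequences. Here the hypotheses enter: the total number of morphisms of $\J$ is $<\lambda$, so $|\mathscr{N}_n(\J)|<\lambda$ for every $n$, and the $AB4^*(\lambda)$ condition guarantees that each such product of fewer than $\lambda$ exact sequences is again exact. Hence $0\to K^\bullet(\G')\to K^\bullet(\G)\to K^\bullet(\G'')\to 0$ is a short exact sequence of complexes, and the associated long exact cohomology sequence supplies the connecting morphisms $\theta^n_E$ and the naturality required of a $\delta$-functor.

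The main step, and the one I expect to be the crux, is proving that $H^n$ is effaceable for every $n\geq 1$, so that Lemma \ref{lemmaeffeceable} applies. For each object $a\in\J$ I would introduce the coinduced functor $F_a\colon\cA\to\cA^{\J^{op}}$ right adjoint to evaluation at $a$ (these are the contravariant functors $F^i_a$ whose action on morphisms is clarified in this appendix). The key computation — the content of Claim \ref{claimneeman} — is that the Roos complex of a coinduced functor $F_a(X)$ is exact in all positive degrees: this reduces to the contractibility of the nerve of the relevant comma category, which possesses an initial (resp.\ terminal) object, so that the augmented complex is split exact. Granting this, every $\G$ admits a monomorphism $u\colon\G\hookrightarrow M$ into a product $M=\prod_{a\in\J}F_a(\G(a))$ assembled from the units of the adjunctions; since $|\J|<\lambda$, another application of $AB4^*(\lambda)$ shows that products commute with passage to cohomology here, whence $H^n(M)\cong\prod_a H^n(F_a(\G(a)))=0$ for $n\geq 1$. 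As the target vanishes, $H^n(u)=0$, which is precisely effaceability.

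Finally, Lemma \ref{lemmaeffeceable} makes $(H^n,\theta^n_E)$ a universal $\delta$-functor. By Fact \ref{unilim} the derived limits $(\lim^n,\theta^n_E)$ are also universal, and the two agree in degree zero through the isomorphism $H^0\cong\lim^0$; Remark \ref{isouniversal} then yields a unique isomorphism of $\delta$-functors, so $\lim^n\cong H^n$ for all $n$, establishing both the existence of the derived functors and their computation as the cohomology of $K^\bullet$. The subtle points to watch are the careful bookkeeping in the coinduced-functor acyclicity argument — the variance issue flagged above, and the precise comma category whose nerve must be contracted — and ensuring that the cardinality bound $|\mathscr{N}_n(\J)|<\lambda$ is invoked uniformly, so that $AB4^*(\lambda)$ legitimizes every exchange of products with exactness and with cohomology.
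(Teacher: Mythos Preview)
Your proposal is correct and follows essentially the same route as the paper: build the nerve-indexed Roos complex, use $AB4^*(\lambda)$ to obtain a $\delta$-functor with $H^0\cong\lim$, prove effaceability by embedding an arbitrary $\G$ into a product of coinduced functors whose Roos complexes are acyclic in positive degrees, and conclude via Lemma~\ref{lemmaeffeceable}, Fact~\ref{unilim}, and Remark~\ref{isouniversal}. Your phrasing in terms of the right adjoint to evaluation and the contractible nerve of a comma category with initial object is a slightly more abstract packaging of exactly what the paper does concretely with its functors $F^i_a$ and its auxiliary category $\mathcal{C}$ (which is precisely the comma category $i/\J$ with initial object $\mathrm{id}_i$); the arguments coincide.
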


\begin{proof}

Here we will construct the general version of the Roos complex and prove that its cohomology is the same as the derived limits by showing that it gives rise to a universal $\delta$-functor. \par

Now, if the cardinality of the total set of morphisms of $\J$ is strictly less than $\lambda$, then the same holds for $\mathscr{N}_k (\J) $, and since $\cA$ is $AB4^* (\lambda) $, given a functor $F: \J^{op} \to \cA $ we can form a complex 

\[   N_0(F) \xrightarrow[]{\partial_0}      N_1(F) \xrightarrow[]{\partial_1}    N_2(F) \xrightarrow[]{\partial_2} ...            \]
where 

\[   N_k (F) =   \prod_{  \lB i_0 \to i_1 \to ... \to i_k \rB  \in  \mathscr{N}_k (\J) }    F(i_0)  .                     \]
The differential $\partial_k : N_k (F) \to N_{k+1} (F)   $ is given by the alternating sum

\[   \partial_k =  \sum_{j=0}^{k+1}  (-1)^j \partial^j_k         ,                      \]
where $  \partial^j_k  $ is the morphism induced by the $j$-th face map of the nerve meaning that for $j > 0 $ we define the $i_0 \to ... \to i_{j-1} \to i_j \to i_{j+1} \to ... \to i_k \to i_{k+1} $-th component as the projection of $N_k(F) $ on the factor of index $i_0 \to ... i_{j-1} \to i_{j+1} \to i_k \to i_{k+1}$ and for $j=0$  we post-compose $F(i_o \to i_1 )$ to the obvious projection. One could similarly define maps induced by the degeneracies, and these maps constitute respectively the cofaces and codegeneracies of a cosimplicial object. The differential is therefore the alternating sum of the cofaces.  \par

We write $T^n (F)$ for the $n$-th cohomology of the Roos complex 

\[      N_0(F) \xrightarrow[]{\partial_0}      N_1(F) \xrightarrow[]{\partial_1}    N_2(F) \xrightarrow[]{\partial_2} ...   \]
Suppose now that we have a short exact sequence in $\cA^{\J^{op}}   $, namely, a sequence of functors $\J^{op} \to \cA$

\[  0 \to F' \to F \to F''  \to 0  . \]  
By $AB4^*(\lambda) $, the sequence

\[   0 \to N_k(F') \to N_k(F)  \to N_k(F'')  \to 0           \]
is also exact for all $k$. So we have a short exact sequence of complexes and by Zig-Zag Lemma a long exact sequence in cohomology. It follows that $(T^n)_{n\geq 0}$   is a $\delta$-functor. It is easy to see that $T^n$ commutes with products. The fact that $\lim$ can be expressed as the equalizer of two morphisms between products and the definition of the differentials give us a natural isomorphism $T^0 \cong \lim^0$.  \par

We now show that every object $F$ in $ \cA^{\J^{op}}   $ can be embedded in an object $F'$ such that $T^n (F') = 0$, for all $n >0$. This will imply the Theorem by Lemma \ref{lemmaeffeceable}, Remark \ref{isouniversal}, and Fact \ref{unilim}.   \par

For every object $i $ of $\J$ and every object $a$ of $\cA$ we define a functor $F^i_a : \J^{op} \to \cA  $  by the rule:

\[   F^i_a (j) = \prod_{\J(i, j) } a      . \]  
That is, $F^i_a (j)  $ is  product of copies of $a$ indexed over all morphisms from $i$ to $j$. The functor is defined on morphisms as follows. A morphism $\beta : j_1 \to j_2$ gets mapped to the morphism $\prod_{\mathcal{J}(i, j_2  )}  a  \to  \prod_{\mathcal{J}(i, j_1  )}  a   $ such that the component corresponding to the factor $\alpha : i \to j_1$ is $id_a \circ \pi_{\beta \circ \alpha}$, where $id_a$ is the identity on $a$ and $\pi_\gamma$, for $\gamma: i \to j_2 $, is the projection of $\prod_{\mathcal{J}(i, j_2  )}$ on the corresponding factor.

\begin{claim}
\label{claimneeman}

The complex

\[      N_0(F^i_a) \xrightarrow[]{\partial_0}      N_1(F^i_a) \xrightarrow[]{\partial_1}    N_2(F^i_a) \xrightarrow[]{\partial_2} ...         \] 
is homotopy equivalent to

\[  a \to 0 \to 0 \to ...           \]

\end{claim}

\begin{proof}[Proof of Claim]

First, one defines a category $\mathcal{C}$ whose objects are copies of $a$, one for each morphism $ i \to \cdot $ and such that 

\[ \textrm{Hom} ( \underset{\alpha : i \to j_1}{a}, \underset{\gamma : i \to j_2}{a} ) = \lB \beta: j_1 \to j_2 \mid   \beta \circ \alpha = \gamma   \rB.                        \]
Then note that 

\[  N_k (F^i_a) = \prod_{i_0 \to ... \to i_k } F^i_a (i_0) = \prod_{i_0 \to ... \to i_k }  \prod_{i \to i_0} a = \prod_{i \to i_0 \to ... \to i_k} a.                          \]
So the cohomological complex above is simply what we get by applying an operation reminiscent of the $\textrm{Hom}( \_, G) $ functor to get cohomology with coefficients in a group $G$ (here $a$ takes the role analogous to that of the target in the contravariant $\textrm{Hom}$) to the chain complex associated to the nerve of $\mathcal{C}$. Now call $[a]$ the trivial category on the initial object $a_{id: i \to i}$, $\eta : \mathcal{C} \to [a]$ the natural projection  and $\iota : [a] \to \mathcal{C}$ the inclusion. Now, $\eta \circ \iota = Id_{[a]}$ and $\iota \circ \eta$ is connected to $Id_{\mathcal{C}}$ by the obvious natural transformation represented in the commutative square below:

\begin{center}

  \begin{tikzcd}
    
    a_{i \xrightarrow[]{id} i} \arrow{d}[swap]{i \xrightarrow{\rho} j_1} \arrow{r}{i \xrightarrow[]{id} i} &    a_{i \xrightarrow[]{id} i} \arrow{d}{i \xrightarrow{\tau} j_2} \\
    
    a_{i \xrightarrow{\rho} j_1}  \arrow{r}{j_1 \xrightarrow[]{\sigma} j_2} & a_{i \xrightarrow{\tau} j_2}

    \end{tikzcd}

\end{center}

Now, by \cite[Proposition 7]{lee1972homotopy}, the existence of such a natural transformation implies that the induced simplicial maps $M(Id_{\mathcal{C}})$ and $M ( \iota \circ \eta       ) = M(\iota) \circ M(\eta) $ are homotopic. Since the same holds trivially between $M(Id_{[a]})$ and $M(\eta) \circ M(\iota) $, we can deduce the desired homotopy equivalence between the associated homological complexes, and then between the cohomological complexes ``with coefficients in $a$".   \end{proof}

So $T^n (F^i_a)  =0$ for all $n > 0$, and since $T^n$ commutes with product, the same holds for any product of $F^i_a $'s. Now, any functor $G$ can be embedded in the product over all $i$ of $F^i_{G(i)} $, the embedding of a factor into the product been given by the composition of the isomorphism  $ A\xrightarrow[]{\sim} A \times 0$ and the monomorphism $ A \times 0 \xhookrightarrow{} A \times B   $, both defined for general abelian categories. This concludes the proof.  \end{proof}

\bibliographystyle{amsplain}
\bibliography{bibliografia} 

\end{document}